\def\NZQ{\Bbb}               
\def\NN{{\NZQ N}}
\def\Mc{{\mathcal{M}}}
\def\Lc{{\mathcal{L}}}
\def\Tc{{\mathcal{T}}}
\def\Rc{{\mathcal{R}}}
\def\B'c{{\mathcal{B'}}}
\def\U'c{{\mathcal{U'}}}
\def\ab{{\bold a}}
\def\bb{{\bold b}}
\def\qb{{\bold q}}
\def\rb{{\bold r}}
\def\opn#1#2{\def#1{\operatorname{#2}}} 
\opn\chara{char}
\opn\length{\ell}
\opn\projdim{proj\,dim}
\opn\injdim{inj\,dim}
\opn\ini{in}
\opn\rank{rank}
\opn\depth{depth}
\opn\height{ht}
\opn\embdim{emb\,dim}
\opn\codim{codim}
\opn\Tr{Tr}
\opn\bigrank{big\,rank}
\opn\superheight{superheight}\opn\lcm{lcm}
\opn\trdeg{tr\,deg}%
\opn\reg{reg}
\opn\lreg{lreg}
\opn\set{set}
\opn\supp{Supp}
\opn\shad{Shad}
\opn\div{div}
\opn\Div{Div}
\opn\cl{cl}
\opn\Cl{Cl}
\opn\Spec{Spec}
\opn\Supp{Supp}
\opn\supp{supp}
\opn\Sing{Sing}
\opn\Ass{Ass}
\opn\Ann{Ann}
\opn\Rad{Rad}
\opn\Soc{Soc}
\opn\Ker{Ker}
\opn\Coker{Coker}
\opn\Im{Im}
\opn\Hom{Hom}
\opn\Tor{Tor}
\opn\Ext{Ext}
\opn\End{End}
\opn\Aut{Aut}
\opn\id{id}
\opn\nat{nat}
\opn\GL{GL}
\opn\SL{SL}
\opn\mod{mod}
\opn\ord{ord}
\opn\aff{aff}
\opn\con{conv}
\opn\relint{relint}
\opn\st{st}
\opn\lk{lk}
\opn\cn{cn}
\opn\core{core}
\opn\vol{vol}
\opn\lex{lex}
\opn\gr{gr}
\def\pot#1#2{#1[\kern-0.28ex[#2]\kern-0.28ex]}
\opn\dirlim{\underrightarrow{\lim}}
\opn\invlim{\underleftarrow{\lim}}
\def\pnt{{\raise0.5mm\hbox{\large\bf.}}}
\let\to=\rightarrow
\def\Implies{\ifmmode\Longrightarrow \else
     \unskip${}\Longrightarrow{}$\ignorespaces\fi}
\def\implies{\ifmmode\Rightarrow \else
     \unskip${}\Rightarrow{}$\ignorespaces\fi}
\def\iff{\ifmmode\Longleftrightarrow \else
     \unskip${}\Longleftrightarrow{}$\ignorespaces\fi}
\newtheorem{Theorem}{Theorem}[section]
\newtheorem{Lemma}[Theorem]{Lemma}
\newtheorem{Corollary}[Theorem]{Corollary}
\newtheorem{Proposition}[Theorem]{Proposition}
\newtheorem{Remark}[Theorem]{Remark}
\newtheorem{Example}[Theorem]{Example}
\newtheorem{Definition}[Theorem]{Definition}
\newtheorem{Theorem1}{Theorem}
\let\epsilon=\varepsilon
\let\phi=\varphi
\let\kappa=\varkappa
\numberwithin{equation}{section}
\title{Powers of lexsegment ideals with linear resolution}
\author{Viviana Ene \and Anda Olteanu }
\thanks{The second author was supported by the CNCSIS-UEFISCSU project PN II-RU PD 23/06.08.2010 and by the strategic grant POSDRU/89/1.5/S/58852, Project ``Postdoctoral program for training scientific researchers" co-financed by the European Social Fund within the Sectorial Operational Program Human Resources Development 2007 - 2013"}
\address{Faculty of Mathematics and Computer Science, Ovidius University, Bd.\ Mamaia 124,
 900527 Constanta, Romania,} \email{vivian@univ-ovidius.ro} 
\address{Faculty of Mathematics and Computer Science, Ovidius University, Bd.\ Mamaia 124,
 900527 Constanta, Romania,} \email{olteanuandageorgiana@gmail.com}
\begin{document}

\maketitle

\begin{abstract}  All powers of lexsegment ideals with linear resolution (equivalently, with linear quotients) have linear quotients with respect to 
suitable orders of the minimal monomial generators. For a large subclass of the lexsegment ideals the corresponding Rees algebra has a quadratic Gr\"obner basis, thus it is Koszul. We also find other classes of monomial ideals with linear quotients whose powers have linear quotients too.\\

Keywords: Lexsegment ideals, linear resolution, linear quotients, Rees ring, Koszul algebra, ideals of fiber type, Gr\"obner bases.\\ 

MSC: Primary 13D02; Secondary 13C15,  13H10, 13P10.

\end{abstract}

\section*{Introduction}

Let $S=K[x_1,\ldots,x_n]$ be the polynomial ring in $n$ variables over a field $K.$ For an integer $d\geq 2,$ we denote by $\Mc_d$ the set of 
all the monomials of $S$ of degree $d.$ A \textit{lexsegment ideal} of $S$ is a monomial ideal generated by a \textit{lexsegment set}, that is a set 
of the form $L(u,v)=\{w\in \Mc_d : u\geq_{\lex}w \geq_{\lex}v\}$ where $u\geq_{\lex}v$ are two given monomials of $\Mc_d.$ 

Lexsegment ideals were introduced in \cite{HM}. Their  homological properties and invariants have been studied in several papers. We refer the reader 
to \cite{ADH}, \cite{BEOS}, \cite{D}, \cite{DH}, \cite{EOS}, \cite{I}, \cite{O}.

In \cite{ADH}, lexsegment ideals with linear resolution are characterized in numerical terms on the ends of the generating lexsegment set. In 
\cite{EOS} it is shown that, for a lexsegment ideal, having a linear resolution is equivalent to having linear quotients with respect to a suitable 
order of the elements in the generating lexsegment set. There are known examples \cite{C} which show that, in general, powers of monomial ideals with linear 
quotients may have no longer linear quotients, or even more, they do not have a linear resolution.

In this paper we show that the lexsegment ideals have a nice behavior with respect to taking powers, namely all powers of a lexsegment ideal with linear 
quotients (equivalently, with linear resolution) have linear quotients too (Theorem~\ref{thmcomp} and Corollary~\ref{cornoncomp}). Therefore, by collecting all the known results, we may now state the 
following

\begin{Theorem1}
Let $u=x_1^{a_1}\cdots x_{n}^{a_n}$ with $a_1>0$ and $v=x_1^{b_1}\cdots x_n^{b_n}$ be monomials of degree $d$ with $u\geq_{lex}v$ and let $I=(L(u,v))$ 
be a  lexsegment ideal. Then the following statements are equivalent;
\begin{itemize}
\item [(1)] $I$ has a linear resolution.
\item [(2)] $I$ has linear quotients.
\item [(3)] All the powers of $I$ have linear quotients.
\item [(4)] All the powers of $I$ have a linear resolution.
\end{itemize}
\end{Theorem1}

In order to prove  $(2)\Rightarrow (3)$ in the above theorem, we are going to study in the first place (Section~\ref{CLI}) the completely lexsegment ideals, that is, those 
whose generating lexsegment set has the property that its shadows are again lexsegment sets, and, secondly (Section~\ref{NCLI}), those  which 
are not completely lexsegment ideals. For the first class of ideals we need to use and develop some of the techniques introduced in \cite{D}. For the second class, we extend some results of \cite{HHV}.

It will turn out that the Rees algebras of the lexsegment ideals which are not completely  have  quadratic Gr\"obner bases,
therefore they are  Koszul (Corollary~\ref{lexKoszul}). For showing this property  we need to slightly extend the notion of  $\ell$-exchange property which was 
defined in \cite{HHV} to the notion of $\sigma$-exchange property. By exploiting this extension, we show in the last section that one may find larger
 classes of monomial ideals for which the Gr\"obner basis of the relation ideal of the Rees algebra $\mathcal{R}(I)$ can be determined (Theorem~\ref{GB}). Moreover, any monomial ideal $I\subset S$ whose minimal monomial generating set satisfies a $\sigma$-exchange property is of fiber type, that is the relations of its Rees algebra $\mathcal{R}(I)$ consist of the relations of the symmetric algebra $\mathcal{S}(I)$ and of  the fiber relations (Corollary~\ref{corfiber}). We also show that the equigenerated monomial ideals whose minimal monomial generating set satisfies a $\sigma$-exchange property have the nice property that all their powers have linear quotients (Theorem~\ref{powersexchange}).

\section{Preliminaries}

In this section we recall the basic definitions and known results needed for the other sections.

Let $K$ be a field and $S=K[x_1,\ldots,x_n]$ the polynomial ring in $n$ variables over $K.$ For an integer $d\geq 2,$ we denote by 
$\Mc_d$ the set of the monomials of degree $d$ in $S$ ordered lexicographically with $x_1> x_2>\cdots > x_n.$ For two monomials 
$u,v\in \Mc_d$ such that $u\geq_{\lex} v,$ we denote by $L(u,v)$ the lexsegment set bounded by $u$ and $v,$ that is,
\[
L(u,v)=\{w\in\Mc_d \colon u\geq_{\lex} w\geq_{\lex} v\}.
\]

If $u=x_1^d,$ then $L(u,v)$ is denoted $L^i(v)$ and is called the \textit{initial lexsegment} determined  by $v.$ Similarly, if 
$v=x_n^d,$ then $L(u,v)$ is denoted by $L^f(u)$ and is called the \textit{final lexsegment} determined by $u.$ An (\textit{initial, 
final}) \textit{lexsegment ideal} of $S$ is a monomial ideal generated by an (initial, final) lexsegment set. According to \cite{D}, 
we denote by $\Lc_{u,v}$ the $K$-subalgebra of $S$ generated by the monomials of $L(u,v).$ In \cite{D} it is proved that $\Lc_{u,v}$
is a Koszul algebra. More precisely, it is shown that the presentation ideal of $\Lc_{u,v}$ has a Gr\"obner basis of quadratic 
binomials. We briefly recall the basic tools used in \cite{D} in proving this result, since they will be also useful in the next 
section. \\

Let $V_{n,d}$ be the Veronese subring of $S,$ that is, $V_{n,d}=K[\Mc_d].$ Let $w$ be a monomial in $\Mc_d.$ One can write 
$w=x_{\ab}=x_{a_1}\cdots x_{a_d},$ where $1\leq a_1\leq \cdots \leq a_d\leq n.$ Consider the set of variables 
\[
\mathbb{T}=\{T_{\ab}\colon \ab=(a_1,\ldots,a_d)\in \NN^d, 1\leq a_1\leq \cdots \leq a_d\leq n\},
\] 
and let $\varphi\colon K[\mathbb{T}]\to V_{n,d}$ be the $K$-algebra homomorphism defined by 
\[
\varphi(T_{\ab})=x_{\ab}=x_{a_1}\cdots  x_{a_d}.\
\]
Then $V_{n,d}\cong K[\mathbb{T}]/\ker \varphi$ and $P=\ker \varphi$ is called the \textit{toric} or the \textit{presentation ideal} of $V_{n,d}.$

If $\ab=(a_1,\ldots,a_d)$ and $\bb=(b_1,\ldots,b_d)$ are vectors with $1\leq a_1\leq \cdots\leq a_d\leq n$ and 
$1\leq b_1\leq \cdots\leq  b_d\leq n,$ we say that $\ab > \bb$ if $x_{\ab}>_{\lex} x_{\bb},$ that is, if there exists 
$s\geq 1$ such that $a_i=b_i$ for $i\leq s-1$ and $a_s < b_s.$ In this way, one gets a total order on the variables of $\mathbb{T}$ 
by setting $T_{\ab}> T_{\bb}$ if $\ab > \bb.$ Let $>_{\lex}$ be the lexicographic order on $K[\mathbb{T}]$ induced by this order of the variables of 
$\mathbb{T}.$ Namely, we have $T_{\ab(1)}\cdots T_{\ab(N)}>_{\lex} T_{\bb(1)}\cdots T_{\bb(N)}$ if there exists 
$1\leq t\leq N$ such that $T_{\ab(i)}=T_{\bb(i)}$ for $i\leq t-1$ and $T_{\ab(t)}> T_{\bb(t)}.$ 

 A \textit{tableau} is an $N\times d$-matrix $A=[\ab(1),\ldots,\ab(N)]$ with entries in $\{1,\ldots,n\}$, with the property that in every row 
$\ab(i)=(a_{i1},\ldots,a_{id})$ we have $a_{i1}\leq\cdots\leq a_{id}$ and the row vectors are in decreasing lexicographic order, that is $
\ab(1)>\ab(2)>\cdots>\ab(N)$ or, equivalently, $T_{\ab(1)}>T_{\ab(2)}>\cdots>T_{\ab(N)}$. The \textit{support} of $A$ is 
the collection $\supp(A)$ of the integers which appear in the tableau with their occurrences. It is clear that one may associate to each tableau $A$ 
its corresponding monomial $T_A:=T_{\ab(1)}\cdots T_{\ab(N)}$ in $K[\mathbb{T}]$. A tableau $A=[\ab(1),\ldots,\ab(N)]$ is \textit{standard} if, for 
every tableau $B=[\bb(1),\ldots,\bb(N)]$ of same support, $B\neq A$, one has
\[T_A=T_{\ab(1)}\cdots T_{\ab(N)}<_{\lex}T_{\bb(1)}\cdots T_{\bb(N)}=T_B.
\]
As follows from \cite[Proposition 2.10]{D}, this is equivalent to saying that for any $1\leq i < j\leq N,$ the quadratic monomial 
$T_{\ab(i)}T_{\ab(j)}$ is standard. In \cite[Lemma 2.9]{D} it is shown that a quadratic monomial $T_{\ab}T_{\bb}$ it is standard if and only if
$\ab=\bb$ or there exists $1\leq i\leq d$ such that $a_1=b_1,\ldots,a_{i-1}=b_{i-1}$, $a_i < b_i,$ and, if $i<d,$ then $b_{i+1}\leq \cdots \leq 
b_d\leq a_{i+1}\leq \cdots \leq a_d.$
If $A$ is a standard tableau, then the monomial $T_{A}=T_{\ab(1)}\cdots T_{\ab(N)}$ is called \textit{standard}. 
Given a set $\mathcal{A}$ of $Nd$ indices in the set $\{1,\ldots,n\}$, then there exists a unique standard tableau $A$ of size $N\times d$ with $
\supp(A)=\mathcal{A}$. 

We recall the recursive procedure given in \cite{D} to construct a standard tableau $A$ with a given support $\mathcal{A}=\{b_1,\ldots,b_{Nd}\}$ where 
$1\leq b_1\leq\cdots\leq b_{Nd}\leq n$. Namely, if $A=[\ab(1),\ldots,\ab(N)]$, where $\ab(i)=(a_{i1},\ldots,a_{id})$ for $1\leq i\leq N$, then we 
proceed as follows. We put $b_1,\ldots,b_N$ on the first column of $A$, that is,
	\[a_{11}=b_1, a_{21}=b_2,\ldots,a_{N1}=b_N.
\]
Now we consider the decomposition of $(b_1,\ldots,b_N)$ in blocks of equal integers and fill in each sub-tableau determined by each block from the bottom to the top in an inductive way. We illustrate this procedure by a concrete example.

Let $N=5$, $d=3$, $n=8$ and 
\[\mathcal{A}=\{1,1,2,3,3,4,4,4,5,5,6,6,6,7,8\}.  
\]
We indicate each step of the standard tableau of support $\mathcal{A}$.
	\[
\begin{array}{c}
1\\
1\\
2\\
3\\
3	
\end{array}\ \longrightarrow\ 
\begin{array}{ccc}
1&&\\
1&&\\
2&&\\
3&4&4\\
3&4&5
\end{array}\ 
\longrightarrow\ 
\begin{array}{ccc}
1&&\\
1&&\\
2&5&6\\
3&4&4\\
3&4&5
\end{array}\ \longrightarrow\ 
\begin{array}{ccc}
1&6&7\\
1&6&8\\
2&5&6\\
3&4&4\\
3&4&5
\end{array}
\]

We have

\begin{Proposition} \cite[Proposition 2.11]{D} The set $\mathcal{G}=\{T_{\qb}T_{\rb}-T_{\ab}T_{\bb}:T_{\ab}T_{\bb}\ \mbox{is a standard}$ $\mbox{ 
monomial and }\supp[\ab,\bb]=\supp[\qb,\rb]\}$ is a Gr\"obner basis of the presentation ideal of $V_{n,d}$ with respect to $<_{\lex}$.
\end{Proposition}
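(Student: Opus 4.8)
The plan is to verify directly the two conditions defining a Gr\"obner basis of $P=\ker\varphi$ with respect to $<_{\lex}$: that $\mathcal{G}\subseteq P$, and that the leading terms of the elements of $\mathcal{G}$ generate $\ini_{<_{\lex}}(P)$. The inclusion $\mathcal{G}\subseteq P$ is immediate, since $\supp[\ab,\bb]=\supp[\qb,\rb]$ means that the two index vectors, read as multisets, agree, whence $\varphi(T_{\ab}T_{\bb})=x_{\ab}x_{\bb}=x_{\qb}x_{\rb}=\varphi(T_{\qb}T_{\rb})$ and $T_{\qb}T_{\rb}-T_{\ab}T_{\bb}\in\ker\varphi$. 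For the leading terms: if $T_{\ab}T_{\bb}$ is a standard quadratic monomial and $[\qb,\rb]$ is another tableau of the same support, then by the very definition of standardness $T_{\ab}T_{\bb}<_{\lex}T_{\qb}T_{\rb}$, so $\ini_{<_{\lex}}(T_{\qb}T_{\rb}-T_{\ab}T_{\bb})=T_{\qb}T_{\rb}$. Hence $\langle\ini_{<_{\lex}}(\mathcal{G})\rangle$ is the monomial ideal generated by all non-standard quadratic monomials, and -- ordering the factors of any monomial of $K[\mathbb{T}]$ into a tableau and invoking \cite[Proposition 2.10]{D} -- a monomial lies outside this ideal exactly when it is a standard monomial $T_A$.

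By the standard criterion it now suffices to show that the residue classes modulo $P$ of the standard monomials $T_A$ are $K$-linearly independent in $V_{n,d}$. Indeed, granting this, divide an arbitrary $f\in P$ by $\mathcal{G}$ with respect to the term order $<_{\lex}$: the division terminates with a remainder $r$ that is a $K$-linear combination of standard monomials, and $r=f-\sum_i q_ig_i\in P$ since $f\in P$ and $\mathcal{G}\subseteq P$; as the standard monomials are linearly independent modulo $P$, we get $r=0$, and therefore $\ini_{<_{\lex}}(f)\in\langle\ini_{<_{\lex}}(\mathcal{G})\rangle$. As a by-product one also obtains $\langle\mathcal{G}\rangle=P$ and that the standard monomials form a $K$-basis of $V_{n,d}$.

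It remains to prove the linear independence, and for this one passes to $S$. For a standard tableau $A=[\ab(1),\dots,\ab(N)]$ one has $\varphi(T_A)=\prod_{i=1}^N x_{\ab(i)}=\prod_{j=1}^n x_j^{c_j(A)}$, where $c_j(A)$ is the number of occurrences of $j$ in $\supp(A)$; thus $\varphi(T_A)$ is a monomial of $S$ whose exponent vector records exactly the support of $A$. If $A,A'$ are standard tableaux with $\varphi(T_A)=\varphi(T_{A'})$, then, comparing degrees, $A$ and $A'$ have the same number of rows, and hence the same support, so by the uniqueness of the standard tableau with a prescribed support (recalled in the Preliminaries) $A=A'$. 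Therefore $\varphi$ sends distinct standard monomials to distinct monomials of $S$, which are $K$-linearly independent; so the images of the standard monomials in $V_{n,d}$ are $K$-linearly independent, as required.

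The one point to get right by hand is the matching of the order $<_{\lex}$ with the notion ``standard'': because a standard tableau is defined to minimize $T_{(\cdot)}$ over its support class, the non-standard quadratic is automatically the leading term of each binomial, which is what makes the division argument well-founded and pins down $\langle\ini_{<_{\lex}}(\mathcal{G})\rangle$; the other subtlety is simply that linear independence must be tested inside $V_{n,d}$, where the exponent vector of $\varphi(T_A)$ literally is the multiplicity vector of $\supp(A)$. Everything else is either formal (the division-algorithm criterion) or already supplied by \cite[Lemma 2.9]{D}, \cite[Proposition 2.10]{D} and the uniqueness of standard tableaux. An alternative route would be Buchberger's criterion: the $S$-polynomial of two of these quadrics is a difference of two cubic monomials of the same support, and one would have to show that straightening carries both to the unique standard $3$-row tableau of that support; the basis-counting argument above disposes of this confluence in one stroke, so I would prefer it.
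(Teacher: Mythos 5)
Your argument is correct: the containment $\mathcal{G}\subseteq P$, the identification of the leading terms with the non-standard quadratics via the minimality built into the definition of ``standard'', the reduction of the Gr\"obner property to linear independence of standard monomials modulo $P$, and the injectivity of $A\mapsto\varphi(T_A)$ on standard tableaux (via uniqueness of the standard tableau with given support) together constitute a complete proof. The paper itself gives no proof, quoting \cite[Proposition 2.11]{D}, and your route is essentially the one used there: the standard monomials are shown to map bijectively onto a $K$-basis of $V_{n,d}$, which forces every $f\in P$ to reduce to zero against $\mathcal{G}$.
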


Moreover, in \cite[Lemma 2.12]{D}, it was proved that if $[\ab,\bb]$ is a standard tableau and $[\qb,\rb]$ is a non-standard tableau such that 
$\supp[\ab,\bb]=\supp[\qb,\rb]$, then $\qb>\ab\geq\bb>\rb$. Consequently, if $T_{\ab}T_{\bb}$ is a standard monomial and $T_{\qb}T_{\rb}$ is such that 
$\supp[\ab,\bb]=~\supp[\qb,\rb]$, then $x_{\ab},x_{\bb}\in L(u,v)$ if $x_{\qb}$ and $x_{\rb}$ belong to $L(u,v)$. Therefore, the set
\[\mathcal{G}'=\{T_{\qb}T_{\rb}-T_{\ab}T_{\bb}:T_{\ab}T_{\bb}\ \mbox{is a standard monomial, }\supp[\ab,\bb]=\supp[\qb,\rb],\]\[ 
\mbox{and }x_{\qb},x_{\rb}\in L(u,v)\}.
\]
is a Gr\"obner basis of the presentation ideal $J_{\Lc_{u,v}}$ of the toric ring $\Lc_{u,v}$.

\section{Powers of completely lexsegment ideals with linear resolution}
\label{CLI}

In order to study the powers of the completely lexsegment ideals with linear quotients, we need to prove some preparatory results.

\begin{Definition}\rm Let $w_1,\ldots,w_N$ be monomials in $\Mc_d$, $N\geq2$. We call the product $w_1\cdots w_N$ \textit{standard} if $T_{w_1}\cdots 
T_{w_N}$ is a standard monomial, that is, the corresponding tableau is standard. 
\end{Definition}

\begin{Definition}\rm If $w_1,\ldots,w_N$ are monomials in $\Mc_d$, and $w_1\cdots w_N=w'_1\cdots w'_N$, where $w'_1,\ldots,w'_n\in \Mc_d$ and $w'_1\cdots w'_N$ is a standard product, we call $w'_1\cdots w'_N$ the \textit{standard representation} of $w_1\cdots w_N$.
\end{Definition}

\begin{Remark}\rm Let $u_1,\ldots,u_N\in L(u,v)$, where $L(u,v)\subset {\mathcal M}_d$ is a lexsegment set. If $w_1\cdots w_N$ is a standard 
representation of $u_1\cdots u_N$, then $w_1,\ldots, w_N \in L(u,v)$. Indeed, let us assume that $T_{u_1}\cdots T_{u_N}$ is not a standard monomial, 
that is $T_{u_1}\cdots T_{u_N}\in\ini_{<} (P)$, where $P\subset K[\mathbb{T}]$ is the presentation ideal of $V_{n,d}$. Then there exists 
$1\leq i < j\leq N$ and $v_i, v_j\in L(u,v)$ with $u_i> v_i\geq v_j > u_j$ such that $T_{u_i}T_{u_j}-T_{v_i}T_{v_j}\in {\mathcal G}^\prime$. Note 
that $T_{u_1}\cdots T_{u_N}>_{\lex} T_{u_1}\cdots T_{u_{i-1}}T_{v_i}T_{u_{i+1}}\cdots T_{u_{j-1}}T_{v_j}T_{u_{j+1}}\cdots T_{u_N}.$
If  the product $T_{u_1}\cdots T_{u_{i-1}}T_{v_i}T_{u_{i+1}}\cdots T_{u_{j-1}}T_{v_j}T_{u_{j+1}}\cdots T_{u_N}$ is standard, we finished. Otherwise we 
continue the reduction. After a finite number of steps, we reach a standard product whose factors belong 
to the lexsegment set $L(u,v)$.
\end{Remark}

Before stating the preliminary results, we fix some notations. For a monomial $u$ of $ S,$ we denote by $\nu_i(u)$ the exponent of the variable $x_i$
in $u,$ that is, $\nu_i(u)=\deg_{x_i}(u)$ for all $1\leq i\leq n.$ We denote $\supp(u)=\{i: \nu_i(u)>0\}$ and set $\max(u)=\max \supp(u),$ 
$\min(u)=\min \supp(u).$

\begin{Lemma}\label{x_1^d} Let $w_1\cdots w_N$ be a standard monomial and let $x_1^dw_1\cdots w_N=w'_1\cdots w'_{N+1}$ be the standard representation 
of $x_1^dw_1\cdots w_N$. Then $w'_1\geq_{lex} w_1$.
\end{Lemma}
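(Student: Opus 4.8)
The plan is to analyze the standard tableau construction described above and track what happens to the first column when we insert one extra copy of the monomial $x_1^d$. Write $w_1 \cdots w_N$ as a standard tableau $A = [\ab(1),\ldots,\ab(N)]$, so the first column of $A$ is the weakly increasing sequence $a_{11} \le a_{21} \le \cdots \le a_{N1}$ obtained by sorting the list of all first-variable indices, i.e. the values $\min(w_i)$ with multiplicity. Inserting $x_1^d$ adds $d$ ones to the support $\mathcal{A}$; in particular it adds one more $1$ to the multiset of indices that land in the first column of the new standard tableau $A' = [\wb'(1),\ldots,\wb'(N+1)]$. Since the recursive procedure of \cite{D} places the sorted first-column entries from top to bottom, the first column of $A'$ is obtained from that of $A$ by inserting a $1$ at the top and shifting everything down by one slot. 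Hence $\nu_1(w_1') = a'_{11} = 1 \le a_{11} = \nu_1(w_1)$ (in fact it equals $1$), with equality forced unless $a_{11} > 1$.

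The key point is then to compare $w_1'$ and $w_1$ beyond the first variable. If $\min(w_1) > 1$, then $w_1' = x_1 \cdot(\text{something})$ while $w_1$ has no $x_1$, so $w_1' >_{\lex} w_1$ and we are done immediately. So assume $\min(w_1) = 1$, i.e. $a_{11} = 1$; then $a'_{11} = 1$ as well, and I must show $w_1' \ge_{\lex} w_1$ by looking at the remaining columns of the first rows. Here is where I would use the block structure of the recursive fill-in: the rows of $A$ (resp. $A'$) whose first entry equals $1$ form a contiguous top block, and within that block the sub-tableau on columns $2,\ldots,d$ is itself filled as a standard tableau on the support consisting of all the non-first-column entries of the $w_i$ with $\min(w_i) = 1$. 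Passing from $A$ to $A'$ enlarges this "first block" by one row and its column-$2$-through-$d$ support by $(d-1)$ extra ones (the remaining entries of the inserted $x_1^d$). So I am reduced to exactly the same statement one dimension down: inserting $x_1^{d-1}$ into a standard tableau with $d-1$ columns does not decrease the lexicographically largest row. This gives a clean induction on $d$, with the base case $d = 1$ trivial (a standard $1$-column tableau is just a sorted list, and prepending the minimal value keeps the top entry minimal, hence $\ge$ never fails — actually the top entry can only decrease to the global minimum, and we need the reverse inequality $w_1' \ge w_1$; at $d=1$ both "monomials" $w_1', w_1$ are single variables $x_{a'_{11}}$ and $x_{a_{11}}$ with $a'_{11} \le a_{11}$, so $x_{a'_{11}} \ge_{\lex} x_{a_{11}}$, confirming the base case).

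I expect the main obstacle to be making the "first block" reduction precise: one has to verify that the recursive procedure of \cite{D}, applied to the support $\mathcal{A} \cup \{1,\ldots,1\}$ ($d$ ones), really does produce a tableau whose top $1$-block, restricted to columns $2,\ldots,d$, is the standard tableau of the appropriate enlarged support — and that this enlarged support is exactly the old column-$2$-through-$d$ support of the $1$-block of $A$ together with $d-1$ extra ones, in the right sorted position. This is essentially unwinding the definition of the blockwise fill-in, but it needs care because the blocks are filled "from the bottom to the top", so I must check that adding a row at the top of the $1$-block and a $1$ at the start of each affected column-support is consistent with the bottom-up fill order (it is, because the extra ones are the smallest entries and thus occupy the top rows of each column's sub-block). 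Once that structural claim is nailed down, the induction closes immediately and Lemma~\ref{x_1^d} follows. An alternative, perhaps cleaner route that avoids the explicit recursion: argue directly that $w_1'$ is the lexicographically smallest monomial $w$ of degree $d$ for which there exists a standard product $w \cdot (\text{rest})$ on support $\mathcal{A} \cup \{1^d\}$, observe that $x_1^d \cdot (w_1 \cdots w_N)$ can be rewritten with first factor $x_1 \cdot w$ for a suitable degree-$(d-1)$ piece $w$, and compare; but the inductive/structural argument above seems the most robust and is the one I would write up.
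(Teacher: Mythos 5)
Your outline is sensible (the easy case $x_1\nmid w_1$ is handled correctly, and a dimension--reduction induction is indeed the right idea), but the structural claim on which everything rests is false. You assert that the first column of the new standard tableau $A'$ is the first column of $A$ with a single $1$ inserted at the top, and that the top $1$-block grows by one row whose columns $2,\ldots,d$ receive ``$d-1$ extra ones''. Neither holds for $d\geq 2$. The first column of a standard tableau consists of the $N+1$ smallest elements of the support; adjoining $x_1^d$ adds $d$ ones, \emph{all} of which (when they fit) go into the first column, displacing the $d-1$ largest entries of the old first column into the later columns of the \emph{lower} blocks. Consequently, generically no ones at all land in columns $2,\ldots,d$, the $1$-block grows by $d$ rows rather than one, and its columns-$2$-through-$d$ support gains $d(d-1)$ entries drawn from the middle of the support, not $d-1$ ones. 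A minimal counterexample: $n=3$, $d=2$, $w_1=x_1x_3$, $w_2=x_2x_3$ (a standard product). The standard representation of $x_1^2w_1w_2$ is $(x_1x_2)(x_1x_3)(x_1x_3)$: the first column goes from $(1,2)$ to $(1,1,1)$, not to $(1,1,2)$, and your recipe predicts $w'_1=x_1^2$, which is wrong. (The lemma's conclusion still happens to hold here, but your induction is reasoning about a tableau that is not $A'$.)

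Because of this, the inductive statement you reduce to (``insert $x_1^{d-1}$ into a $(d-1)$-column standard tableau'') is not the statement that actually arises. After stripping the first column, what one must insert into $(w_1/x_1)\cdots(w_s/x_1)$ (where $s$ is the number of old factors divisible by $x_1$) is $x_q^{d(d-1)}$, where $q$ is the minimum of that stripped product --- and even reaching this point requires an argument: one must compare $q$ with $q'=\min((w'_1/x_1)\cdots(w'_{s+d}/x_1))$, dispose of $q>q'$ directly, and rule out $q<q'$ by counting $\sum_{1<m\leq q}\nu_m$ on both sides. The paper's proof does exactly this and then closes an induction on the number of \emph{variables} (not on $d$), applying the lemma $d$ times in the variables $x_q,\ldots,x_n$. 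Your write-up is missing both the correct description of how the support redistributes and this counting step, so the induction as proposed does not establish the claim about the true $w'_1$. (Two smaller slips: $\nu_1(w'_1)$ is an exponent while $a'_{11}$ is a variable index, so the identification $\nu_1(w'_1)=a'_{11}$ is a type error; and in your ``alternative route'' $w'_1$ is the lexicographically \emph{largest} factor of the standard product, characterized as the smallest possible such largest factor, not ``the lexicographically smallest monomial $w$ for which a standard product exists''.)
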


\begin{proof} We make induction on the number of variables. The case $n=2$ is straightforward. Let
$n>2$. One may assume, by induction on the degree $d$ of the monomials, that $\nu_1(w_N)=0$. If $\nu_1(w_1)=0$, then $x_1\mid w'_1$, hence $w'_1>_{lex} 
w_1$. Let $\nu_1(w_1)=1$. Therefore, there exists $0<s<N$ such that $x_1\mid w_s$ and $x_1\nmid w_{s+1}$. If $s+d\geq N+2$, then we finished, since 
$x_1^2\mid w'_1$ by the construction of the standard monomials, and $\nu_1(w_1)=1$. Now, let us consider $s+d\leq N+1$. Let 
\[q=\min\left(\frac{w_1}{x_1}\cdots\frac{w_s}{x_1}\right) \text{ and } q'=\min\left(\frac{w'_1}{x_1}\cdots\frac{w'_{s+d}}{x_1}\right).
\]
Then, since $w_1\cdots w_N$ and $w_1^\prime\cdots w_{N+1}^\prime$ are standard products, we have $q=\min(w_1/x_1),$ $q^\prime=\min(w^\prime/x_1)$,  
$\max(w_j)\leq q\leq \min(w_i/x_1)$ for all $1\leq i\leq s < j\leq N$, and  $\max(w_j^\prime)\leq q^\prime\leq \min(w_i^\prime/x_1)$ for all $1\leq  
i\leq s+d < j\leq N+1.$
If $q<q'$, then we get
	\[\nu_{1<m\leq q}(w_1\cdots w_N):=\sum_{1<m\leq q}\nu_m(w_1\cdots w_N)\geq \deg(w_{s+1}\cdots w_N)=\]
	\[= (N-s)d>(N+1-s-d)d\geq\nu_{1<m\leq q}(w'_1\cdots w'_{N+1}):=\sum_{1<m\leq q}\nu_m(w'_1\cdots w'_{N+1}),
\]
which is impossible since $\nu_m(w_1\cdots w_N)=\nu_m(w'_1\cdots w'_{N+1})$ for all $m>1$. Therefore, we must have $q\geq q^\prime.$ If $q> q^\prime$, 
then we finished since $w_1^\prime/ x_1>_{\lex}w_1/ x_1,$ whence $w_1^\prime>_{\lex}w_1.$ What is left to consider is the case $q=q^\prime.$ In this 
case we have
	\[\nu_{1<m\leq q}(w_1\cdots w_N)=(N-s)d+\nu_q(w_1)+\cdots+\nu_q(w_s)
\]
and
	\[\nu_{1<m\leq q}(w'_1\cdots w'_{N+1})=(N+1-s-d)d+\nu_q(w'_1)+\cdots+\nu_q(w'_{s+d}).
\]
Since $\nu_{1<m\leq q}(w_1\cdots w_N)=\nu_{1<m\leq q}(w'_1\cdots w'_{N+1})$, we obtain
	\[\nu_q(w'_1)+\cdots+\nu_q(w'_{s+d})=d(d-1)+\nu_q(w_1)+\cdots+\nu_q(w_s).
\]
This implies that
	\[\frac{w'_1}{x_1}\cdots\frac{w'_{s+d}}{x_1}=x_q^{d(d-1)}\left(\frac{w_1}{x_1}\cdots\frac{w_{s}}{x_1}\right).
\]
Note that \large$\frac{w_1}{x_1}\cdots\frac{w_{s}}{x_1}\ $\normalsize is a standard product in the variables $x_q,\ldots,x_n$. Applying induction on the number $n$ of variables, we have, after $d$ steps, that
	\[x_q^{d(d-1)}\left(\frac{w_1}{x_1}\cdots\frac{w_{s}}{x_1}\right)=\bar{w}_1\cdots\bar{w}_{s+d},
\]
where $\bar{w}_1\cdots\bar{w}_{s+d},$ is a standard product and $\bar{w}_1\geq_{lex} w_1/x_1$. But \large$\frac{w'_1}{x_1}\cdots\frac{w'_{s+d}}{x_1}\ $\normalsize is a standard product as well, hence we have $\bar{w}_1= w'_1/x_1\geq_{lex} w_1/x_1$, whence $w'_1\geq_{lex} w_1$.
\end{proof}

\begin{Lemma}\label{x_nu_1} Let $u_1\cdots u_N$ and $w_1\cdots w_N$ be standard products and $u_1\cdots u_N x_n=x_1w_1\cdots w_N$. Then we have $u_1\geq_{lex} w_1.$
\end{Lemma}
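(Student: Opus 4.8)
The idea is to reduce the statement to a combinatorial assertion about the supports of the two standard products, and then to run essentially the same induction as in the proof of Lemma~\ref{x_1^d}.

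First I would pass to supports. Put $\mathcal A=\supp(u_1\cdots u_N)$ and $\mathcal B=\supp(w_1\cdots w_N)$; the hypothesis $u_1\cdots u_Nx_n=x_1w_1\cdots w_N$ says precisely that $\mathcal A\uplus\{n\}=\{1\}\uplus\mathcal B$ as multisets, so $x_1\mid u_1\cdots u_N$ and $x_n\mid w_1\cdots w_N$. Since a standard product is uniquely determined by its support, $u_1\cdots u_N$ is the standard tableau with support $\mathcal A$ and $w_1\cdots w_N$ the one with support $\mathcal B$; in particular $u_1$ and $w_1$ are their top rows and depend only on $\mathcal A$, resp.\ on $\mathcal B$. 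Writing $\mathcal B$ in increasing order as $b_1\le\cdots\le b_{Nd}$ (so $b_{Nd}=n$), the multiset $\mathcal A$ in increasing order is $1\le b_1\le\cdots\le b_{Nd-1}$: one deletes the largest entry $n$ of $\mathcal B$ and prepends a new smallest entry $1$. This description is all that will be used.

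Now argue by induction on the number $n$ of variables. For $n=2$ the top row of the standard tableau with support $\mathcal A$ is $x_1^{\lceil\alpha/N\rceil}x_2^{d-\lceil\alpha/N\rceil}$, where $\alpha=\nu_1(u_1\cdots u_N)$; since $\nu_1(w_1\cdots w_N)=\alpha-1$ and $\lceil\alpha/N\rceil\ge\lceil(\alpha-1)/N\rceil$, we get $u_1\ge_{lex}w_1$. For $n>2$ I would distinguish three cases according to $p:=\nu_1(w_1\cdots w_N)$. If $p=0$, then $\mathcal A$ contains exactly one $1$, which must occupy the $(1,1)$ entry of the standard tableau, so $x_1\mid u_1$ while $x_1\nmid w_1$, whence $u_1>_{lex}w_1$. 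If $p\ge N$, then the first column of each of the two tableaux consists entirely of $1$'s, so $x_1$ divides every $u_i$ and every $w_i$; replacing each $u_i$ and each $w_i$ by its quotient by $x_1$ turns the identity into one of the same shape one degree lower, for the standard products $(u_i/x_1)_i$ and $(w_i/x_1)_i$ (deleting the first column of a standard tableau again produces a standard tableau, by the description of standard quadratic monomials from \cite[Lemma 2.9]{D}), and an induction on the degree $d$ yields $u_1/x_1\ge_{lex}w_1/x_1$, hence $u_1\ge_{lex}w_1$.

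The remaining case $1\le p\le N-1$ is the heart of the matter. Here one first notes that all $1$'s of $\mathcal A$ (resp.\ of $\mathcal B$) are consumed by the first column, so that $\nu_1(u_1)=\nu_1(w_1)=1$; write $u_1=x_1m$ and $w_1=x_1m'$ with $m,m'\in\Mc_{d-1}$ involving only $x_2,\ldots,x_n$. Then $m$ (resp.\ $m'$) is the top row of the standard tableau built on the block of rows whose first entry is $1$, a tableau with $p+1$ (resp.\ $p$) rows and width $d-1$; and using the description of $\mathcal A$ in terms of $\mathcal B$ one writes down the supports of these two smaller tableaux explicitly in terms of the $b_i$. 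Peeling off first columns and comparing, at each level of the recursive construction of the standard tableau, the minima of the corresponding blocks reduces the inequality $m\ge_{lex}m'$ either to a strict inequality of such minima — in which case we are done — or to a comparison of top rows of standard tableaux over the $n-1$ variables $x_2,\ldots,x_n$, where the induction hypothesis applies. This bookkeeping of block‑minima, which must be carried out while keeping track of the differing numbers of rows on the two sides, is the analogue in the present situation of the comparison of the quantities $q$ and $q'$ in the proof of Lemma~\ref{x_1^d}, and it is the step I expect to require the most care; the rest of the argument is formal.
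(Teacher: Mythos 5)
Your reduction in the crucial case $1\le p\le N-1$ (where $p=\nu_1(w_1\cdots w_N)$) does not land you back in the situation of the lemma, and this is a genuine gap. After peeling off $x_1$ and restricting to the $1$-blocks, you are comparing the top row $m$ of a standard $(p+1)\times(d-1)$ tableau with the top row $m'$ of a standard $p\times(d-1)$ tableau. If one actually writes out the supports (as you propose), using $\mathcal A\uplus\{n\}=\{1\}\uplus\mathcal B$ and the recursive bottom-to-top construction, one finds: if $\mathcal S=\supp(m\text{-tableau})$, then $\supp(m'\text{-tableau})=(\mathcal S\setminus\{d\ \text{smallest entries}\})\uplus\{n\}$, while the number of rows drops from $p+1$ to $p$. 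This is \emph{not} the relation in the statement of the lemma, which compares tableaux of the \emph{same} size whose supports differ by removing the single smallest entry and adjoining $n$. Consequently the inductive hypothesis ``lemma in $n-1$ variables'' is not applicable to the pair $(m,m')$, and the sentence ``...or to a comparison of top rows of standard tableaux over the $n-1$ variables $x_2,\ldots,x_n$, where the induction hypothesis applies'' is unjustified; you would at minimum need a separate auxiliary lemma of a different shape. A concrete check ($n=4$, $d=3$, $N=3$, $\mathcal B=\{1,2,2,2,3,3,3,4,4\}$) gives $\mathcal S=\{3,3,3,4\}$ and $\supp(m'\text{-tableau})=\{4,4\}$, i.e.\ three entries removed, one $n$ added, and one row lost, confirming the mismatch.

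By way of comparison, the paper inducts on $N$ rather than on $n$: it splits off the \emph{bottom} block of equal first-column entries (exploiting that $u_k\cdots u_N$ and the corresponding block $w_{k-1}\cdots w_N$ agree up to a single extra degree-$d$ factor $w$ appended on one side), which lands exactly in an instance of the same lemma with smaller $N$; the remaining boundary case $b_1=\cdots=b_N=1$ is then handled by invoking Lemma~\ref{x_1^d}. If you want to salvage an induction-on-$n$ argument you would need to formulate and prove the stronger auxiliary statement above (different row counts, $d$ entries removed), which is not supplied. A smaller point: ``deleting the first column of a standard tableau again produces a standard tableau'' is only true when that column is constant (it fails, e.g., for the $5\times3$ example tableau in the paper); in your $p\ge N$ case the column is all $1$'s so the step is fine, but the cited justification via \cite[Lemma 2.9]{D} is not the right reason.
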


\begin{proof} We use induction on $N$. If $N=1$, the inequality $u_1\geq_{lex} w_1$ is obvious. Now we assume $N>1$ and let $u_1\cdots 
u_N=x_{b_1}\cdots x_{b_{Nd}}$, where $1=b_1\leq\cdots\leq b_{Nd}\leq n$ and $\min(u_j)=b_j$ for all $1\leq j\leq N$. We first notice that we may 
assume without loss of generality that $\nu_1(u_i)\leq 1$ for all $1\leq i\leq N.$ If $b_2>b_1$, we obviously have $w_1\leq_{lex} u_1$ since 
$\min(w_1)=b_2$. Therefore, we may assume $b_1=b_2=1$. If $b_1<b_N$, let $k\leq N$ be the largest integer such that $b_{k-1}<b_k=\cdots=b_N$. We have 
$k\geq 3$. Since $u_1\cdots u_N$ is a standard product, we get
	\[u_1\cdots u_{k-1}=x_{b_1}\cdots x_{b_{k-1}}x_{b_{N+(d-1)(N-k+1)+1}}\cdots x_{Nd}.
\]
Similarly, since $w_1\cdots w_{N}$ is a standard product, we get
	\[w_1\cdots w_{k-2}=x_{b_2}\cdots x_{b_{k-1}}x_{b_{N+(d-1)(N-k+2)+2}}\cdots x_{Nd}x_n.
\]
Therefore, there exists a monomial $w\in\Mc_d$, namely
	\[w=x_{b_{N+(d-1)(N-k+1)+1}}\cdots x_{b_{N+(d-1)(N-k+2)+1}}
\]
such that 
\[x_1w_1\cdots w_{k-2}w=u_1\cdots u_{k-1}x_n.
\] 
One observes that $w_1\cdots w_{k-2}w$ and $u_1\cdots u_{k-1}$ are standard products. Then, by induction on $N$, it follows that $w_1\leq_{lex} u_1$.

It remains to consider $b_1=\cdots=b_N=1<b_{N+1}\leq\cdots\leq b_{Nd}$, since, by our assumption on $u_1,\ldots, u_{N}$, we cannot have $b_{N+1}=1$. If 
$b_{N+1}<b_{N+d+1}$, then, by the construction of standard products, we get $w_1<_{lex}u_1$. Let $b_{N+1}=b_{N+2}=\cdots=b_{N+d+1}$. Then we obtain
	\[x_n\cdot\frac{u_1}{x_1}\cdots\frac{u_N}{x_1}=\frac{w_1}{x_1}\cdots\frac{w_{N-1}}{x_1}\left(x_{b_{N+1}}\cdots x_{b_{N+d}}\right),
\]
whence
	\[x_n\left(\frac{u_1}{x_1}\cdots\frac{u_N}{x_1}\right)=x_{b_{N+1}}\left(x_{b_{N+1}}^{d-1}\frac{w_1}{x_1}\cdots\frac{w_{N-1}}{x_1}\right).
\]
Let $w'_1\cdots w'_{N}$ be the standard representation of $x_{b_{N+1}}^{d-1}$\large$\frac{w_1}{x_1}\cdots\frac{w_{N-1}}{x_1}$\normalsize. By Lemma 
\ref{x_1^d}, we have $w'_1\geq_{lex} w_1/x_1$. On the other hand, we have
	\[x_n\left(\frac{u_1}{x_1}\cdots\frac{u_N}{x_1}\right)=x_{b_{N+1}}\left(w'_1\cdots w'_N\right), 
\]
with \large$\frac{u_1}{x_1}\cdots\frac{u_N}{x_1}\ $\normalsize and $w'_1\cdots w'_N$ standard monomials in a number of variables smaller than $n$. By induction on $n$ we get $u_1/x_1\geq_{lex} w'_1$ whence $u_1/x_1\geq_{lex} w_1/x_1$, which yields $u_1\geq_{lex} w_1$.
\end{proof}

\begin{Lemma}\label{N+1} Let $u_1\geq_{lex}\cdots\geq_{lex}u_{N}\geq_{lex}u_{N+1}$ be monomials of degree $d$ with $\nu_1(u_i)\leq 1$ for all 
$1\leq i\leq N,$ such that $u_1\cdots u_N$ is a standard product and $\max(\supp(u_1\cdots u_N))\leq\min(\supp(u_{N+1}))$. Let $v_1\cdots v_{N+1}$ be 
the standard representation of $u_1\cdots u_{N}u_{N+1}$. Then $v_{N+1}\leq_{lex}u_N$.
\end{Lemma}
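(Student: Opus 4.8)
My plan is to reduce the statement, via the recursive construction of standard tableaux, to a purely combinatorial comparison of the last factors of two standard products whose supports satisfy a coordinatewise inequality, and then to settle that comparison by induction on the degree. Write the exponent sequence of $u_1\cdots u_N$ as $1\le b_1\le\cdots\le b_{Nd}\le n$. Since $u_1\cdots u_N$ is a standard product, $[u_1,\dots,u_N]$ is the standard tableau with this support, so its $i$-th row begins with $b_i$ and $\min(u_i)=b_i$. The hypothesis $\max(\supp(u_1\cdots u_N))\le\min(\supp(u_{N+1}))$ says that $b_{Nd}$ is at most every exponent of $u_{N+1}$, so the sorted exponent sequence of $u_1\cdots u_{N+1}$ is $b_1,\dots,b_{Nd}$ followed by the sorted exponents of $u_{N+1}$; hence the first column of $[v_1,\dots,v_{N+1}]$ is $b_1,\dots,b_{N+1}$, and $\min(v_{N+1})=b_{N+1}\ge b_N=\min(u_N)$. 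If $b_{N+1}>b_N$ then $v_{N+1}<_{\lex}u_N$ and we are done, so I may assume $b_{N+1}=b_N=:t$; let $p$ be least with $b_p=t$.

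Next I would peel off this common leading entry. The bottom block of equal first-column entries of $[u_1,\dots,u_N]$ — the maximal one ending in the last row — is rows $p,\dots,N$, all with leading entry $t$; since in the construction this block is filled first, with the least exponents not already placed in the first column, deleting its leading $t$'s leaves the standard $(N-p+1)\times(d-1)$ tableau on $\mathcal S_U:=\{b_{N+1},\dots,b_{N+(N-p+1)(d-1)}\}$, whose last row $\bar u$ satisfies $u_N=x_t\bar u$. In the same way the bottom block of $[v_1,\dots,v_{N+1}]$ is rows $p,\dots,N+1$, and peeling its leading $t$'s leaves the standard $(N-p+2)\times(d-1)$ tableau on $\mathcal S_V:=$ the $(N-p+2)(d-1)$ least elements of the multiset $\{b_{N+2},\dots,b_{Nd}\}\cup\{\text{exponents of }u_{N+1}\}$, with last row $\bar v$ and $v_{N+1}=x_t\bar v$. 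Using once more that the exponents of $u_{N+1}$ dominate all $b_i$ (and that $N-p+1\le N$), a short check shows: listing $\mathcal S_U$ and $\mathcal S_V$ increasingly as $\mathcal S_U[1]\le\cdots$ and $\mathcal S_V[1]\le\cdots$, one has $|\mathcal S_V|>|\mathcal S_U|$ and $\mathcal S_V[j]\ge\mathcal S_U[j]$ for every $j\le|\mathcal S_U|$. It then suffices to prove $\bar v\le_{\lex}\bar u$, for then $v_{N+1}=x_t\bar v\le_{\lex}x_t\bar u=u_N$.

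I would isolate this as a general claim: if $\mathcal S,\mathcal S'$ are finite multisets of positive integers with $|\mathcal S|=rc$, $|\mathcal S'|=r'c$, $r\le r'$, $c\ge1$, and $\mathcal S'[j]\ge\mathcal S[j]$ for all $j\le rc$ (both multisets listed in increasing order), then the last rows $\bar u,\bar v$ of the standard $r\times c$ and $r'\times c$ tableaux on $\mathcal S,\mathcal S'$ satisfy $\bar v\le_{\lex}\bar u$. The proof is by induction on $c$. For $c=1$ the last row is just the largest element, and $\mathcal S'[r']\ge\mathcal S'[r]\ge\mathcal S[r]$ gives $\bar v\le_{\lex}\bar u$. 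For $c\ge2$ the leading entries of $\bar u,\bar v$ are $\mathcal S[r]$ and $\mathcal S'[r']$, with $\mathcal S'[r']\ge\mathcal S'[r]\ge\mathcal S[r]$; if the inequality is strict we are done, otherwise put $t:=\mathcal S[r]$, so that $\mathcal S'[r]=\cdots=\mathcal S'[r']=t$ by monotonicity and the hypothesis, and let $p,q$ be least with $\mathcal S[p]=t$, $\mathcal S'[q]=t$. The hypothesis forces $q\le p$. Peeling the leading $t$'s off the two bottom blocks leaves standard $(r-p+1)\times(c-1)$ and $(r'-q+1)\times(c-1)$ tableaux on $\mathcal S_1=\{\mathcal S[r+1],\dots,\mathcal S[r+(r-p+1)(c-1)]\}$ and $\mathcal S'_1=\{\mathcal S'[r'+1],\dots,\mathcal S'[r'+(r'-q+1)(c-1)]\}$; since $q\le p$ we have $r-p+1\le r'-q+1$, and since $\mathcal S'$ is nondecreasing and $r+j\le rc$ for $j\le|\mathcal S_1|$, we get $\mathcal S'_1[j]=\mathcal S'[r'+j]\ge\mathcal S'[r+j]\ge\mathcal S[r+j]=\mathcal S_1[j]$. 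So $\mathcal S_1,\mathcal S'_1$ satisfy the hypothesis with $c-1$ columns; by induction their last rows compare, and prepending $x_t$ gives $\bar v\le_{\lex}\bar u$.

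The one genuinely delicate point is choosing this formulation. The naive invariant for the second paragraph — "$\mathcal S_V$ is $\mathcal S_U$ with its least element removed and $d$ elements $\ge\max\mathcal S_U$ adjoined" — is correct but is \emph{not} preserved when one strips the first column inside the induction of the claim (the number of adjoined elements changes), whereas the coordinatewise domination $\mathcal S'[j]\ge\mathcal S[j]$ for $j\le|\mathcal S|$ \emph{is} preserved, precisely because stripping first columns shifts the indexing of $\mathcal S'$ by at least as much as that of $\mathcal S$ — this is the inequality $q\le p$. Apart from this, one should record the degenerate sub-cases — e.g.\ when some $\mathcal S_1$ is a single repeated value, so the relevant last row is a pure power — but these are immediate because then $\bar v$ is coordinatewise no smaller than $\bar u$. (The argument above does not actually use the hypothesis $\nu_1(u_i)\le1$.)
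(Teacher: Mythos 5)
Your argument is correct, but it is organized differently from the paper's. Both proofs start identically (reduce to the case $b_{N+1}=b_N=t$ and focus on the bottom block of equal first-column entries), but from there the paper peels off the rows \emph{above} the bottom block and inducts on the number of factors $N$: it exhibits a monomial $w$ with $v_k\cdots v_{N+1}=u_k\cdots u_N\,w$ and $\max(\supp(u_k\cdots u_N))\leq\min(\supp(w))$, so that the original hypotheses are reproduced for a strictly shorter product; this is precisely where the assumption $\nu_1(u_i)\leq 1$ enters, to guarantee that the bottom block is a proper sub-tableau ($k>1$) and the induction makes progress. You instead peel off the first \emph{column} of the bottom block and induct on the number of columns $d$, which forces you to replace the paper's invariant (``the adjoined factor dominates the whole support'') by the weaker but column-stable invariant of coordinatewise domination of the sorted supports; your inequality $q\leq p$ is exactly what makes that invariant propagate, and you are right to flag this as the delicate point. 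The payoff of your route is a self-contained combinatorial lemma that applies even when the bottom block is the entire tableau, so you genuinely do not need $\nu_1(u_i)\leq 1$ -- a hypothesis the paper's induction cannot dispense with as written. The price is a somewhat heavier reliance on the recursive description of standard tableaux (each block's trailing columns form the standard tableau on the least available entries), but that description is exactly what the preliminaries supply, so your proof is complete.
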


\begin{proof} We use induction on $N$. For  $N=1$, since $v_1v_2=u_1u_2$ and $v_1v_2$ is a standard product, then we  have 
$u_1>_{lex} v_1\geq_{lex} v_2 >_{lex}u_2$.

Let $N>1$ and assume that $u_1\cdots u_{N}=x_{b_1}\cdots x_{b_{Nd}}$ and $u_{N+1}=x_{b_{Nd+1}}\cdots x_{b_{(N+1)d}}$ with
	\[b_1\leq\cdots\leq b_{Nd}\leq b_{Nd+1}\leq\cdots\leq b_{(N+1)d}.
\]
Since $u_1\cdots u_N$ is a standard product, we have $\min(u_j)=b_j$ for all $1\leq j\leq N$. Since $v_1\cdots v_Nv_{N+1}$ is standard, we have 
$\min(v_j)=b_j$ for all $1\leq j\leq N+1$. If $b_{N+1}>b_N$, we obviously have $v_{N+1}\leq_{lex}u_N$. Therefore, it remains to consider that 
$b_N=b_{N+1}$. Let $1\leq k\leq N$ be the largest integer such that $b_{k-1}<b_k=\cdots=b_N$. We have $k>1$ since otherwise $\nu_1(u_1)\geq 2$. Since 
$u_1\cdots u_N$ is standard, we get that
	\[u_k\cdots u_N=x_{b_k}\cdots x_{b_N}x_{b_{N+1}}\cdots x_{b_{N+(d-1)(N-k+1)}}.
\]
Similarly, since $v_1\cdots v_{N+1}$ is standard, we get
	\[v_k\cdots v_{N+1}=x_{b_k}\cdots x_{b_N}x_{b_{N+1}}\cdots x_{b_{N+(d-1)(N-k+2)+1}}.
\]
Therefore, there exists a monomial $w\in\Mc_d$, namely
	\[w=x_{b_{N+(d-1)(N-k+1)+1}}\cdots x_{b_{N+(d-1)(N-k+2)+1}},
\]
such that $v_k\cdots v_{N+1}=u_k\cdots u_Nw$ and $\max(\supp(u_k\cdots u_N))\leq\min(\supp(w))$. One may note that $u_k\cdots u_N$ and $v_k\cdots 
v_{N+1}$ are  standard products as well. By the induction hypothesis, we get $v_{N+1}\leq_{lex}u_N$.
\end{proof}

\begin{Lemma}\label{uN>wN} Let $u_1,\ldots,u_N,w_1,\ldots,w_N$ be monomials of degree $d$ in $S$ such that $x_nu_1\cdots u_N=x_1w_1\cdots w_N$, where 
$u_1\cdots u_N$, $w_1\cdots w_N$ are standard products. Then $u_N\geq_{lex} w_N$.
\end{Lemma}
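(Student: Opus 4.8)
The statement is the mirror image of Lemma~\ref{x_nu_1}: there we multiplied by $x_n$ on the \emph{left-hand side} and controlled the \emph{first} factor; here we multiply by $x_n$ on the left and want to control the \emph{last} factor $w_N$ versus $u_N$. The cleanest approach is to reduce to Lemma~\ref{x_nu_1} by a reversal/duality argument: replacing each variable $x_i$ by $x_{n+1-i}$ reverses the lexicographic order on $\Mc_d$ and swaps the roles of $x_1$ and $x_n$, and it turns a standard product into an anti-standard one. Since the formalism of standard tableaux in the excerpt is tied to the fixed order $x_1>\cdots>x_n$, this substitution does not land back inside the same combinatorial setup, so a direct appeal is not quite automatic; hence I would instead imitate the induction in Lemma~\ref{x_nu_1}, working directly with the bottom of the tableau rather than the top.

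So the plan is: induct on $N$. For $N=1$ the relation $x_nu_1=x_1w_1$ forces $u_1\geq_{\lex}w_1$ by inspecting which variables are removed/added (one loses an $x_n$ and gains an $x_1$, so $w_1$ is lex-larger is \emph{false} — wait: gaining $x_1$ makes $w_1$ larger, but we want $u_N\geq_\lex w_N$; with $N=1$ this says $u_1\geq_\lex w_1$, which would be wrong). So the base case already tells me the reversal is essential: on the \emph{last} factor, the standard tableau puts the lex-\emph{smallest} monomials at the bottom, so multiplying the whole product by $x_n/x_1$ does not simply push the bottom factor up. Concretely, for $N=1$, $u_1 x_n = w_1 x_1$ means $w_1 = u_1 x_1/x_n$, so $w_1 >_\lex u_1$ in general — consistent with the claim $u_N\geq_\lex w_N$ being the \emph{reverse} inequality, so the claim as mirrored is $u_N\geq_\lex w_N$ and the base case gives $u_1\le_\lex w_1$: contradiction unless I have the direction confused. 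I would double-check against the statement: it asserts $u_N\geq_\lex w_N$, and for $N=1$ we get $w_1\ge_\lex u_1$ — so the \emph{whole point} is that for $N\ge 2$ the standardization rearranges things, and the inequality flips compared to the single-factor intuition. Thus the correct route is genuinely the reversal of Lemma~\ref{x_nu_1}: apply the substitution $x_i\mapsto x_{n+1-i}$, which sends $>_\lex$ to $<_\lex$, sends $x_n u_1\cdots u_N = x_1 w_1\cdots w_N$ to $x_1 u_1^*\cdots u_N^* = x_n w_1^*\cdots w_N^*$, and sends standard products (bottom-anchored by lex-smallest) to standard products in the reversed order; after re-sorting the factors into the standard order of the reversed alphabet, Lemma~\ref{x_nu_1} applied to $w_1^*\cdots$ (now the thing being multiplied by $x_1$) yields the first factor in the reversed setup dominates, i.e. the last factor in the original setup satisfies $u_N\geq_\lex w_N$.

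Concretely I would: (i) set $u_i^* = \rho(u_i)$, $w_i^* = \rho(w_i)$ where $\rho$ is the $K$-algebra automorphism $x_i\mapsto x_{n+1-i}$; note $\rho$ is an anti-isomorphism for $\leq_\lex$ on $\Mc_d$; (ii) observe that if $u_1\cdots u_N$ is a standard product then, after reordering, $\rho(u_N)\cdots\rho(u_1)$ is a standard product in the alphabet $x_1>\cdots>x_n$ — this is the key combinatorial lemma to verify, using the explicit description of standard quadratic monomials (\cite[Lemma 2.9]{D}) quoted in the Preliminaries, since the "nested" condition $b_{i+1}\le\cdots\le b_d\le a_{i+1}\le\cdots\le a_d$ is symmetric under simultaneously reversing the entries and swapping $\ab\leftrightarrow\bb$; (iii) rewrite the hypothesis as $\rho(x_n)\,\rho(u_1)\cdots\rho(u_N) = \rho(x_1)\,\rho(w_1)\cdots\rho(w_N)$, i.e. $x_1\,u_1^*\cdots u_N^* = x_n\,w_1^*\cdots w_N^*$; reorder both sides into standard products; (iv) apply Lemma~\ref{x_nu_1} (with the roles of the two standard products interchanged — reading that lemma's conclusion $u_1\ge_\lex w_1$ with $x_1w_1\cdots=u_1\cdots x_n$) to conclude the leading factor on the $u^*$-side dominates, which after undoing $\rho$ is exactly $u_N\geq_\lex w_N$.

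**Main obstacle.** The one genuinely nontrivial point is step (ii): showing that reversal-plus-reorder carries standard products to standard products. This requires checking the quadratic criterion of \cite[Lemma 2.9]{D} is stable under the operation $\ab\mapsto \rho(\bb)_{\mathrm{sorted}}$, $\bb\mapsto\rho(\ab)_{\mathrm{sorted}}$, which amounts to a short but careful index-chase through the two cases ($i<d$ and $i=d$) of that lemma. Once that symmetry is in hand, the rest is bookkeeping. If this reversal symmetry turns out to be false (the standard-tableau construction is not obviously self-dual because it fills blocks "from the bottom up"), the fallback is to redo the induction of Lemma~\ref{x_nu_1} verbatim but tracking the bottom factor: split off the block of factors sharing the maximal value of $\min(\cdot)$, use that within such a block the tableau is determined, and push the $x_n$ into that bottom block, reducing $N$; the estimate on $\sum_{m<q}\nu_m$ used in Lemma~\ref{x_1^d} has an evident analogue for $\sum_{m>q}\nu_m$.
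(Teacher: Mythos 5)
Your main route does not work, and the reasoning that led you to it begins with an algebra slip. From $x_nu_1=x_1w_1$ you get $w_1=x_nu_1/x_1$, not $u_1x_1/x_n$: one removes an $x_1$ and inserts an $x_n$, so $\nu_1(w_1)=\nu_1(u_1)-1$ and hence $u_1>_{lex}w_1$. The case $N=1$ therefore agrees with the lemma; there is no ``flip'' and no evidence that a reversal is forced. More seriously, both pillars of the reversal argument are false. First, the substitution $\rho\colon x_i\mapsto x_{n+1-i}$ is \emph{not} an anti-isomorphism of $\leq_{lex}$ on $\Mc_d$: for $n=3$ one has $x_1x_3>_{lex}x_2^2$ and also $\rho(x_1x_3)=x_1x_3>_{lex}x_2^2=\rho(x_2^2)$. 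What $\rho$ actually does is interchange the lexicographic and the reverse lexicographic orders, so even if every other step went through, the inequality you would transport back is a revlex inequality, not $u_N\geq_{lex}w_N$. Second, reversal-plus-reordering does not carry standard products to standard products. Take the paper's own $5\times 3$ example with $n=8$ and support $\mathcal{A}=\{1,1,2,3,3,4,4,4,5,5,6,6,6,7,8\}$: applying $a\mapsto 9-a$ to each row of the standard tableau and re-sorting yields a tableau whose first column is $(1,2,3,4,5)$, whereas the standard tableau on the reversed support must have first column $(1,2,3,3,3)$ (the five smallest elements of that support). So the construction is genuinely not self-dual, exactly the failure mode you yourself flagged as the ``main obstacle.''

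Your fallback is the viable direction and is essentially what the paper does, but it is only a sketch. The paper reduces to $\nu_1(w_N)=0$, writes $u_1\cdots u_N=x_{b_1}\cdots x_{b_{Nd}}$ with $\min(u_j)=b_j$ and $\min(w_j)=b_{j+1}$, disposes of the case $b_{N+1}>b_N$ immediately, and otherwise isolates the bottom block $b_{k-1}<b_k=\cdots=b_N=b_{N+1}$; the explicit form of standard tableaux on such a block gives $w_{k-1}\cdots w_N=u_k\cdots u_N\cdot w$ for a monomial $w$ with $\max(\supp(u_k\cdots u_N))\leq\min(w)$, and the conclusion follows from Lemma~\ref{N+1} --- not from an analogue of the counting estimate in Lemma~\ref{x_1^d}. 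As submitted, the proof has a genuine gap: the primary argument is unsound and the correct alternative is not carried out.
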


\begin{proof} We may assume that $\nu_1(w_N)=0$ which implies that $\nu_1(u_i)\leq 1$ for all $1\leq i\leq N.$ 
Let $u_1\cdots u_N=x_{b_1}\cdots x_{b_{Nd}}$ with $1=b_1\leq\cdots \leq b_{Nd}$ and $\min(u_j)=b_j$ for all $1\leq j\leq N$. We have 
$\min(w_j)=b_{j+1}$ for all $1\leq j\leq N$. If $b_{N+1}>b_N$, then $w_{N}\leq_{lex}u_N$. Let $b_{N+1}=b_N$ and $1\leq k\leq N$ be the 
largest integer such that $b_{k-1}<b_k=\cdots=b_N$. If $k=1,$ then $b_1=\cdots = b_N=b_{N+1}$. Since $w_1\cdots w_N$ is a standard product, we 
get $\nu_1(w_N)> 0,$ which is impossible by our assumption. Therefore, it follows that $k>1.$  Since $u_1\cdots u_{N}$ is a standard 
product, we have 
\[u_k\cdots u_N=x_{b_k}\cdots x_{b_N}x_{b_{N+1}}\cdots x_{b_{N+(d-1)(N-k+1)}}.
\]
Similarly, since $w_1\cdots w_{N}$ is a standard product, we get 
\[w_{k-1}\cdots w_{N}=x_{b_k}\cdots x_{b_N}x_{b_{N+1}}\cdots x_{b_{N+(d-1)(N-k+2)+1}}.
\]
Therefore, if 
\[w=x_{b_{N+(d-1)(N-k+1)+1}}\cdots x_{b_{N+(d-1)(N-k+2)+1}},
\] 
we have 
\[w_{k-1}\cdots w_{N}=u_k\cdots u_{N}w.
\]
and $\max(\supp(u_k\cdots u_N))\leq\min(w)$. Since $u_k\cdots u_N$ and $w_{k-1}\cdots w_{N}$ are also standard products, by using the previous lemma, 
we get $w_{N}\leq_{lex} u_{N}$.
\end{proof}

In order to state the main theorem of this section we need to recall the following

\begin{Theorem} [\cite{EOS},\cite{BEOS}]\label{linear} Let $u=x_1^{a_1}\cdots x_{n}^{a_n}$, with $a_1>0$, and $v=x_1^{b_1}\cdots x_n^{b_n}$ be monomials of degree $d$ with $u\geq_{lex}v$ and let $I=(L(u,v))$ be a completely lexsegment ideal. The following statements are equivalent:
\begin{enumerate}
	\item $u$ and $v$ satisfy one of the following conditions:
	\begin{itemize}
	\item[(i)] $u=x_1^ax_2^{d-a}$, $v=x_1^ax_n^{d-a}$ for some $a$ with $0<a\leq d$;
	\item[(ii)] $b_1<a_1-1$;
	\item[(iii)] $b_1=a_1-1$ and, for the largest monomial $w$ of degree $d$ with $w<_{lex}v$, one has $x_1w/x_{\max(w)}\leq_{lex}u$.
\end{itemize}
\item $I$ has linear quotients.
\item $I$ has a linear resolution.
\end{enumerate}
\end{Theorem}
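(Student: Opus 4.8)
The plan is to prove the cycle $(2)\Rightarrow(3)\Rightarrow(1)\Rightarrow(2)$. The implication $(2)\Rightarrow(3)$ uses nothing about lexsegments: an equigenerated graded ideal with linear quotients has a linear resolution. So the content lies in $(1)\Rightarrow(2)$ and $(3)\Rightarrow(1)$, and the key is first to reduce the shape of the pair $(u,v)$. If $a_1=d$ then $I$ is an initial lexsegment ideal $L^i(v)$, which is strongly stable, hence has linear quotients and a linear resolution, and a direct check shows that (ii) or (iii) must hold (or $v=x_1^d$ and (i) holds); dually, if $v=x_n^d$ then $I=L^f(u)$ becomes an initial lexsegment ideal after reversing the order of the variables, so the same applies. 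Hence one may assume $a_1<d$ and $v\neq x_n^d$.

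Next I would normalise the remaining cases. If $b_1=a_1$, write $u=x_1^{a_1}u''$, $v=x_1^{a_1}v''$ with $u'',v''$ of degree $d-a_1$ and prime to $x_1$; then $I=x_1^{a_1}(L(u'',v''))$, and a shadow computation shows that $\shad(L(u,v))$ is again a lexsegment only when $u''=x_2^{d-a_1}$ and $v''=x_n^{d-a_1}$, i.e.\ only in case (i). So for a completely lexsegment ideal with $b_1=a_1$, condition (1) holds iff we are in case (i), and there $I=x_1^{a_1}(x_2,\dots,x_n)^{d-a_1}$ has linear quotients, since powers of a maximal ideal do and scaling all generators by the fixed monomial $x_1^{a_1}$ leaves the relevant colon ideals unchanged. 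If $b_1<a_1$, then $u\geq_{\lex}v$ together with the failure of (ii) forces $b_1=a_1-1$; writing $u=x_1^{a_1}u''$, $v=x_1^{a_1-1}v''$ one checks that
\[
L(u,v)=x_1^{a_1}\{w'\leq_{\lex}u''\}\ \cup\ x_1^{a_1-1}\{w''\geq_{\lex}v''\}=x_1^{a_1-1}L(x_1u'',v''),
\]
where $L(x_1u'',v'')$ is a lexsegment of degree $d-a_1+1$ whose upper end $x_1u''$ has $x_1$-exponent $1$ and whose lower end $v''$ has $x_1$-exponent $0$; moreover the inequality appearing in (iii) is preserved under this reduction. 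Thus it suffices to treat $a_1=1$, $b_1=0$; case (ii) is likewise normalised to $b_1=0$ by dividing out $x_1^{b_1}$, after which $I$ contains every monomial of $x_1$-degree strictly between $0$ and $a_1$.

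After these reductions, $(1)\Rightarrow(2)$ is a direct verification: order $G(I)=L(u,v)$ decreasingly by $\leq_{\lex}$ and show that $(w_1,\dots,w_{j-1}):w_j$ is generated by variables for every $j$. In the normalised case $a_1=1$, $b_1=0$ the generators split into the $x_1$-part $x_1\{m\leq_{\lex}u''\}$ — a shift of a final lexsegment ideal in $x_2,\dots,x_n$, hence stable — and the non-$x_1$-part $\{m'\geq_{\lex}v''\}$ — an initial lexsegment ideal in $x_2,\dots,x_n$, hence strongly stable; the colon contributions within each part are generated by variables because both sub-ideals have linear quotients with respect to $\leq_{\lex}$, and the one delicate computation is the colon between a generator of the non-$x_1$-part and an earlier generator of the $x_1$-part, where exactly the inequality $x_1w/x_{\max(w)}\leq_{\lex}u$ of (iii) (with $w$ the largest degree-$d$ monomial below $v$) supplies the covering variable $x_1$. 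Case (ii) is similar and easier, since $I$ then already contains all monomials of intermediate $x_1$-degree.

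For $(3)\Rightarrow(1)$ I would argue by contraposition: by the reductions above, the only configuration falling outside (i)--(iii) is $a_1=1$, $b_1=0$ with the largest degree-$d$ monomial $w<_{\lex}v$ satisfying $x_1w/x_{\max(w)}>_{\lex}u$, so that $x_1w/x_{\max(w)}\notin L(u,v)$. In this situation I would produce an index $i$ with $\beta_{i,i+d+1}(I)\neq0$, equivalently a minimal syzygy of $I$ of degree $\geq d+2$, which precludes a $d$-linear resolution. Concretely, one inspects the lcm-lattice of $G(I)$ near the bottom generator $v$: the absent monomial $x_1w/x_{\max(w)}$ is precisely the degree-$d$ element one would need inside $L(u,v)$ in order to factor the syzygy joining $v$ to the bottom generator $x_1x_n^{d-1}$ of the $x_1$-part through monomials of degree $d+1$, and its absence forces that syzygy (or a neighbouring one in the lattice) to be a minimal first syzygy of degree $\geq d+2$. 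I expect this to be the main obstacle: identifying the obstructing syzygy and verifying its minimality is where the combinatorics of the generating lexsegment is really needed, whereas $(2)\Rightarrow(3)$ is formal and $(1)\Rightarrow(2)$, once $(u,v)$ is normalised, is bookkeeping with colon ideals.
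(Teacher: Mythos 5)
This theorem is not proved in the paper at all: it is quoted from \cite{EOS} and \cite{BEOS}, so I can only judge your attempt on its own merits, against the one piece of the cited proof that the paper does reproduce, namely the linear-quotients order of \cite[Theorem 1.2]{EOS} recalled in the proof of Theorem 2.9. There your proposal has a genuine gap: for $(1)\Rightarrow(2)$ you order $G(I)=L(u,v)$ \emph{decreasingly by lex} and claim the colon ideals are generated by variables. This fails. Take $n=4$, $d=2$, $u=x_1x_3$, $v=x_2x_4$; then $L(u,v)=\{x_1x_3,x_1x_4,x_2^2,x_2x_3,x_2x_4\}$ is a completely lexsegment set satisfying (iii) (the largest $w<_{lex}v$ is $x_3^2$ and $x_1w/x_{\max(w)}=x_1x_3=u$), yet in decreasing lex order
\[
(x_1x_3,\,x_1x_4):x_2^2=(x_1x_3,\,x_1x_4),
\]
which is not generated by variables: the covering variable $x_1$ you invoke would require $x_1x_2^2/x_2=x_1x_2\in L(u,v)$, but $x_1x_2>_{lex}u$. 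Condition (iii) only controls the extremal monomial below $v$; it does not put $x_1w_j/x_{\max(w_j)}$ into $L(u,v)$ for every generator $w_j$ with $\nu_1(w_j)=0$. The order that actually works (and is the one the paper quotes from \cite{EOS}) is $w\succ w'$ iff $\nu_1(w)<\nu_1(w')$, or $\nu_1(w)=\nu_1(w')$ and $w>_{lex}w'$: the generators \emph{not} divisible by $x_1$ come first, and it is they that supply the covering variables for the later colons. So the ``delicate case'' is the reverse of the one you identified.

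Two further points. First, $(3)\Rightarrow(1)$ in your write-up is a plan, not a proof: you say you ``would produce'' a nonlinear minimal syzygy and ``expect'' this to be the main obstacle, but the obstructing syzygy is never exhibited, and this is precisely where the numerical conditions (ii)--(iii) enter in \cite{ADH}/\cite{EOS}. Second, a final lexsegment $L^f(u)$ does not become an initial lexsegment after reversing the variables (lex is not self-dual under $x_i\mapsto x_{n+1-i}$); what is true, and what you need, is that $L^f(u)$ is strongly stable with respect to the order $x_n>\cdots>x_1$, which again forces the \emph{increasing} lex order on its generators, not the decreasing one. The reductions to $a_1=1$, $b_1=0$ and the identity $L(u,v)=x_1^{a_1-1}L(x_1u'',v'')$ are fine.
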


\begin{Remark}\label{comp}\rm It is obviously that, if a completely lexsegment ideal is determined by $u$ and $v$ satisfying condition (i) in the above theorem, then all its powers have linear quotients. Therefore, we only need to study the powers of completely lexsegment ideals which are determined by monomials $u$ and $v$ satisfying condition (ii) or (iii) in Theorem \ref{linear}.
\end{Remark}

\begin{Theorem} Let $u=x_1^{a_1}\cdots x_{n}^{a_n}$ with $a_1>0$ and $v=x_1^{b_1}\cdots x_n^{b_n}$ be monomials of degree $d$ with $u\geq_{lex}v$ and let $I=(L(u,v))$ be a completely lexsegment ideal with  linear quotients. Then all the powers of $I$ have linear quotients.
\end{Theorem}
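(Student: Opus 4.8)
The plan is to establish linear quotients for $I^k$ by exhibiting an explicit total order on the minimal monomial generators of $I^k$ and checking the linear-quotients colon condition directly. A natural set of generators of $I^k$ is the set of products $u_{i_1}\cdots u_{i_k}$ with $u_{i_j}\in L(u,v)$; by the Remark following the Veronese discussion, every such product has a unique standard representation $w_1\cdots w_k$ with all $w_j\in L(u,v)$, so we may index the generators of $I^k$ by standard products of length $k$ with factors in $L(u,v)$. The order I would put on these generators is the one induced by $>_{\lex}$ on the associated monomials $T_{w_1}\cdots T_{w_k}$ in $K[\mathbb{T}]$ (equivalently, the lexicographic order on the sorted factor sequences $(w_1,\dots,w_k)$). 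By Remark~\ref{comp} we may assume $u,v$ satisfy (ii) or (iii) of Theorem~\ref{linear}, so in particular $I$ itself has linear quotients with respect to the $\geq_{\lex}$-order on $L(u,v)$; this will be the base case $k=1$.

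For the inductive step, fix two standard products $m=w_1\cdots w_k$ and $m'=w'_1\cdots w'_k$ with $m'$ preceding $m$ in the chosen order, and I must produce a generator $m''$ strictly preceding $m$ and a variable $x_t$ with $m''/\gcd(m'',m)=x_t$ and $x_t \mid m'/\gcd(m',m)$. The strategy is the standard one for such arguments: compare $m$ and $m'$ factor by factor. Let $p$ be the first index where the sorted factor sequences differ; then $w'_p <_{\lex} w_p$. Using that $I$ has linear quotients on $L(u,v)$ applied to the pair $w_p, w'_p$ inside $L(u,v)$, together with the lexsegment structure, one finds a suitable replacement at position $p$; the key technical point is that after performing such a replacement one must re-standardize the resulting product, and one has to control how the standard representation changes. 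This is exactly what Lemmas~\ref{x_1^d}, \ref{x_nu_1}, \ref{N+1} and \ref{uN>wN} are designed for: they tell us precisely how the largest factor $w'_1$ and the smallest factor $w'_N$ of a standard product move when we multiply by $x_1$, divide by $x_n$, or append a new small factor, so that the re-standardized product is still comparable to the original in the chosen order and the colon condition survives the re-standardization.

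Concretely, I would argue as follows. Reduce first, by dividing out the largest common factor, to the case where $m$ and $m'$ share no factor and $w_1 >_{\lex} w'_1$. Because $I=(L(u,v))$ has linear quotients with respect to the lex order on its generators, there is a generator $w \in L(u,v)$ with $w >_{\lex} w'_1$ and $w/\gcd(w,w'_1) = x_s$ for some variable $x_s$ dividing $w'_1/\gcd(w'_1,w_1)$. Now replace $w'_1$ by $w$ in $m'$: the monomial $\widetilde m = w \cdot w'_2 \cdots w'_k$ satisfies $\widetilde m = x_s \cdot (m'/x_r)$ for appropriate $r$, and I claim its standard representation $m''$ still precedes $m$ and still differs from $m$ in a single variable at the level of the colon ideal. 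The inequality "$m''$ precedes $m$" is where Lemmas~\ref{x_1^d} and \ref{uN>wN} (and \ref{x_nu_1}, \ref{N+1}) enter: they guarantee that passing to the standard representation after multiplying by one variable and dividing by another only moves the sorted factor sequence in the lex-decreasing direction relative to where $w >_{\lex} w'_1$ already put it, so $m''$ cannot jump past $m$. Finally one checks that $m''/\gcd(m'',m)$ is a single variable $x_t$ with $x_t \mid m/\gcd(m,m')$ — here one uses that standardization preserves total support, so the multiset difference between $m$ and $m''$ is the same single-variable swap as between $m$ and $\widetilde m$.

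The main obstacle I anticipate is exactly the bookkeeping in that last paragraph: controlling, in the cases (ii) and (iii) of Theorem~\ref{linear} separately, that the witness variable $x_s$ produced by linear quotients of $I$ at the top factor interacts correctly with standardization of the whole length-$k$ product. Case (iii), where $b_1 = a_1-1$, is the delicate one, since there the linear-quotients witness for $I$ can force the variable $x_1$ or $x_{\max}$ to appear, and it is precisely to handle the re-standardization triggered by an extra $x_1$ (Lemma~\ref{x_1^d}) or an extra $x_n$ (Lemmas~\ref{x_nu_1}, \ref{uN>wN}) that the preparatory lemmas were proved; assembling them into a clean induction, with the right choice of which factor to modify, is the crux of the argument.
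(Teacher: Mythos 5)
There is a genuine gap, and it begins with your base case. The order with respect to which $I=(L(u,v))$ has linear quotients in cases (ii) and (iii) of Theorem~\ref{linear} is \emph{not} the lexicographic order on $L(u,v)$ but the order $\succ$ of \cite[Theorem 1.2]{EOS}: $w\succ w'$ iff $\nu_1(w)<\nu_1(w')$, or $\nu_1(w)=\nu_1(w')$ and $w>_{lex}w'$. These two orders genuinely differ. For instance, for $u=x_1x_3$, $v=x_2x_4$ in $K[x_1,\ldots,x_4]$ (a completely lexsegment ideal satisfying (iii)) one has $(x_1x_3,x_1x_4):x_2^2=(x_1x_3,x_1x_4)$, which is not generated by variables, so $I$ does not have linear quotients in the lex order. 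Hence both your chosen order on the generators of $I^k$ and the input you feed into the inductive step are unavailable. The paper instead orders the generators of $I^N$ by the same $\succ$ extended to $\Mc_{Nd}$ and splits into the two cases $\nu_1(v_1\cdots v_N)=\nu_1(u_1\cdots u_N)$ and $\nu_1(u_1\cdots u_N)>\nu_1(v_1\cdots v_N)$.

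There are two further structural problems. First, the witness must satisfy $m''/\gcd(m'',m)=x_t$, i.e., it must differ from the \emph{later} generator $m$ by a single variable; replacing a factor of $m'$, as you propose, produces a generator one variable away from $m'$, not from $m$. The correct move (the paper's Case I) is to pick an index $i$ with $\nu_i(m')>\nu_i(m)$, find a factor $u_q$ of $m$ with $i<\max(u_q)$, and replace $u_q$ by $x_iu_q/x_1$ or $x_iu_q/x_{\max(u_q)}$, one of which lies in $L(u,v)$ by conditions (ii)/(iii); no re-standardization is needed at all, since any product of $N$ elements of $L(u,v)$ is already a generator of $I^N$ and the colon condition is checked on monomials of $S$. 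Second, the genuinely hard case is entirely missing from your sketch: when $\nu_1(m)>\nu_1(m')$ and every factor $u_q$ has $\max(u_q)\le i$, no single-factor swap is available, and one must produce a relation $x_ju_1\cdots u_N=x_1w_1\cdots w_N$ with all $w_i\in L(u,v)$ by chaining standardizations. Lemmas~\ref{x_nu_1} and \ref{uN>wN} are used precisely there, to squeeze the intermediate standard factors between $u_1$ and $v_N$ (hence inside the lexsegment), not to control how a replaced top factor moves under re-standardization.
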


\begin{proof} By using Remark \ref{comp}, we have to consider only the cases when $u$ and $v$ satisfy one of the following conditions:
\begin{itemize}
	\item[(a)] $b_1<a_1-1$;
	\item[(b)] $b_1=a_1-1$ and for the largest monomial $w$ of degree $d$ with $w<_{lex}v$, one has $x_1w/x_{\max(w)}\leq_{lex}u$.
\end{itemize}

We recall (see \cite[Theorem 1.2]{EOS}) that in these cases, $I$ has linear quotients with respect to the following  order on $\Mc_d$. For $w,w'\in \Mc_d$ we set $w\succ w'$ if $\nu_1(w)<\nu_1(w')$ or $\nu_1(w)=\nu_1(w')$ and $w>_{lex}w'$. 

Let $N>1$. We show that $I^N$ has linear quotients with respect to the order $\succ$ on the set $\Mc_{Nd}$. Let $u_1\cdots u_N, v_1\cdots v_N\in I^N$ be two standard products such that $v_1\cdots v_N\succ u_1\cdots u_N$. We have to show that there exists a monomial $w\in I^N$ such that $w\succ u_1\cdots u_N$, $w/\gcd(w,u_1\cdots u_N)=x_i$ and $x_i$ divides the monomial $v_1\cdots v_N/\gcd(v_1\cdots v_N,u_1\cdots u_N)$. We have to analyze two cases.

\underline{Case I:} $\nu_1(v_1\cdots v_N)=\nu_1(u_1\cdots u_N)$. By the definition of the order $\succ$, we must have $v_1\cdots v_N>_{lex}u_1\cdots u_N$. Let $i\geq 2$ be the smallest index such that $\nu_i(v_1\cdots v_N)>\nu_i(u_1\cdots u_N)$. We claim that there exists $1\leq q\leq N$ such that $i<\max(u_q)$. Indeed, otherwise we have $i\geq\max(u_1\cdots u_N)$ and obtain
	\[Nd=\deg(u_1\cdots u_N)=\sum_{k=1}^i\nu_k(u_1\cdots u_N)<\sum_{k=1}^i \nu_k(v_1\cdots v_N)\leq Nd,
\]
a contradiction.

Let, therefore, $1\leq q\leq N$ be such that $i<\max(u_q)$. Then we get
	\[\frac{x_iu_q}{x_1}\in L(u,v)\mbox{ or }\frac{x_iu_q}{x_{\max(u_q)}}\in L(u,v)
\]
(see also the proof of \cite[Theorem 1.2]{EOS}).
We recall the  argument which was used in \cite[Theorem 1.2]{EOS} and will be also used in this proof several times. We have $x_iu_q/x_1<_{lex}u_q\leq_{lex}u$ and $x_iu_q/x_{\max(u_q)}>_{lex}u_q\geq_{lex}v$. If we assume that 
$x_iu_q/x_1<_{lex}v$ and $x_1u_q/x_{\max(u_q)}>_{lex}u$, we get $b_1=a_1-1$ and $x_iu_q/x_1\leq_{lex}w$, where $w$ is the largest monomial of degree 
$d$ such that $w<_{lex}v$. We get
	\[\frac{x_iu_q}{x_1x_{\max(x_iu_q/x_1)}}\leq_{lex} \frac{w}{x_{\max(w)}},
\]
which, by using condition (b), leads to
	\[\frac{x_iu_q}{x_{\max(u_q)}}\leq_{lex} \frac{x_1w}{x_{\max(w)}}\leq_{lex}u,
\]
a contradiction. Therefore, one of the monomials $u'_q=x_iu_q/x_1$ or $u''_q=x_iu_q/x_{\max(u_q)}$ belongs to $L(u,v)$. Note that 
$u'_q\succ u_q$ and $u''_q\succ u_q$. Then we may take $w=u_1\cdots u_{q-1}u'_qu_{q+1}\cdots u_N $ or $w=u_1\cdots u_{q-1}u''_qu_{q+1}\cdots u_N$. In 
each case it follows that $w\succ u_1\cdots u_N$, $w/\gcd(w,u_1\cdots u_N)=x_i$ and  $x_i | v_1\cdots v_N/\gcd(v_1\cdots 
v_N,u_1\cdots u_N)$.

\underline{Case II:} $\nu_1(u_1\cdots u_N)>\nu_1(v_1\cdots v_N)$. Then there exist two monomials $m,m'\in S$ of same degree, let us say $p$, such that 
$\gcd(m,m')=1$ and 
\begin{eqnarray}\label{1}
m u_1\cdots u_N=m'v_1\cdots v_N.
\end{eqnarray}

Since $\nu_1(u_1\cdots u_N)>\nu_1(v_1\cdots v_N)$, we get $x_1|m^\prime$ and $x_1\nmid m.$ Let $i=\min(\supp(m)).$ 

If there exists $1\leq q\leq N$ 
such that $i < \max(u_q),$ then, as in the proof of Case (I), we may take $w=u_1\cdots u_q^\prime\cdots u_N$ where $u_q^\prime=x_i u_q/ x_1$ or 
$u_q^\prime= x_i u_q/ x_{\max(u_q)}.$ Then the following conditions hold: $w\succ u_1\cdots u_N$, $w/ \gcd(w,u_1\cdots u_N)=x_i$ and 
$x_i $ divides the monomial $v_1\cdots v_N/ (\gcd(v_1\cdots v_N,u_1\cdots u_N))$. 

Now let $\max(u_q)\leq i$ for all $1\leq q\leq N,$ that is, 
$\supp(u_1\cdots u_N)\subset \{1,\ldots, i\}.$ We show by induction on $p=\deg(m)$ that there exists $j>1$ such that $x_j | m$ and

\begin{eqnarray}\label{2}
x_j u_1\cdots u_N=x_1 w_1\cdots w_N,
\end{eqnarray}
where $w_1,\ldots,w_N\in L(u,v)$ and $w_1\cdots w_N$ is a standard product. If $p=1$, there is nothing to prove. Let $p>1$ and assume that there exists 
$1 < j < i$ such that $x_j | m^\prime.$ There exists $1\leq q\leq N$ such that $j<i\leq \max(v_q)$ since $x_i | v_1\cdots v_N.$ As $j < \max(v_q),$ it follows that 
one of the monomials $x_j v_q/ x_1\in L(u,v)$ or $x_j v_q/ x_{\max(v_q)}\in L(u,v)$. Let us consider that $v_q^\prime=x_j v_q/ x_1\in L(u,v).$
By using (\ref{1}), we get the relation 
\[
m u_1\cdots u_N=(x_1 m^\prime/x_j)(v_1\cdots v^\prime_q\cdots v_N).
\]
If $v_q^{\prime\prime}=x_j v_q/ x_{\max(v_q)}\in L(u,v),$ then, by using again (\ref{1}), we get the relation
\[
m u_1\cdots u_N=(x_{\max(v_q)} m^\prime/x_j)(v_1\cdots v^{\prime\prime}_q \cdots v_N).
\]
These last two relations show that either there exists a relation of the form $m u_1\cdots u_N $ $= x_1^p w_1\cdots w_N$ where $w_1\cdots w_N$ is a standard 
product of monomials of $L(u,v)$, with $\deg(m)=p$ and $x_1\nmid m,$ or we may apply induction on $p$ and reach the desired conclusion. 
In the first case, let $m=x_{i_1}x_{i_2}\cdots x_{i_p}$, with $i=i_1\leq i_2\leq\cdots\leq i_p\leq n$. 
For $j=\overline{1,p}$, let $w_{j1}\cdots w_{jN}$ be the standard product such that
	\[
	x_{i_1}u_1\cdots u_N=x_1w_{11}w_{12}\cdots w_{1N},
\]
\[
	x_{i_2}w_{11}w_{12}\cdots w_{1N}=x_1w_{21}w_{22}\cdots w_{2N},
\]
\[
	x_{i_3}w_{21}w_{22}\cdots w_{2N}=x_1w_{31}w_{32}\cdots w_{3N},
\]
$$\vdots$$
\[
	x_{i_p}w_{p-1,1}w_{p-1,2}\cdots w_{p-1,N}=x_1w_{p1}w_{p2}\cdots w_{pN}.
\]
Multiplying these equalities, we get
	\[
	mu_{1}\cdots u_{N}=x_1^pw_{p1}w_{p2}\cdots w_{pN},
\]
hence $w_{pi}=v_i$, for $1\leq i\leq N$, since $w_{p1}w_{p2}\cdots w_{pN}$ and $v_1\cdots v_N$ are standard products.

It is easily seen that $\supp(w_{j1}\cdots w_{jN})\subset\{1,\ldots,i_j\}$ for all $1\leq j\leq p$. Therefore, we may apply Lemma \ref{x_nu_1} and Lemma \ref{uN>wN} and get
	\[u\geq_{lex}u_1\geq_{lex}w_{11}\geq_{lex}w_{21}\geq_{lex}\cdots\geq_{lex}w_{p1}=v_1\geq_{lex}v
\]
and
\[u\geq_{lex}u_N\geq_{lex}w_{1N}\geq_{lex}w_{2N}\geq_{lex}\cdots\geq_{lex}w_{pN}=v_N\geq_{lex}v.
\]
In particular, we have\[u\geq_{lex}w_{11}\geq_{lex}\cdots\geq_{lex}w_{1N}\geq_{lex}v,
\]
whence $$x_{i_1}u_1\cdots u_N=x_1w_{11}\cdots w_{1N},$$ and $w_{11},\ldots,w_{1N}\in L(u,v)$. Therefore, we have  an equality of the form 
$x_ju_1\cdots u_N=x_1w_1\cdots w_N,$ where $w_1\cdots w_N\in I^N$ is a standard product and $j\geq 2.$ Let $w=(x_ju_1\cdots u_N)/x_1.$ Then 
$w\succ u_1\cdots u_N, w/\gcd(w,u_1\cdots u_N)=x_j$ and $x_j$ divides the monomial $ v_1\cdots v_N/\gcd(v_1\cdots v_N,u_1\cdots u_N),$ which ends our proof.
\end{proof}

Combining the above theorem with \cite[Theorem 1.3]{ADH} and \cite[Theorem 1.2]{EOS}, we get the following equivalent statements.

\begin{Theorem}\label{thmcomp}
Let $u=x_1^{a_1}\cdots x_{n}^{a_n}$ with $a_1>0$ and $v=x_1^{b_1}\cdots x_n^{b_n}$ be monomials of degree $d$ with $u\geq_{lex}v$ and let $I=(L(u,v))$ 
be a completely lexsegment ideal with  linear quotients. The the following statements are equivalent;
\begin{itemize}
\item [(1)] $u$ and $v$ satisfy one of the following conditions:
\begin{itemize}
	\item[(i)] $u=x_1^ax_2^{d-a}$, $v=x_1^ax_n^{d-a}$ for some $a$ with $0<a\leq d$;
	\item[(ii)] $b_1<a_1-1$;
	\item[(iii)] $b_1=a_1-1$ and, for the largest monomial of degree $d$ with $w<_{lex}v$, one has $x_1w/x_{\max(w)}\leq_{lex}u$.
\end{itemize}
\item [(2)] $I$ has a linear resolution.
\item [(3)] $I$ has linear quotients.
\item [(4)] All the powers of $I$ have linear quotients.
\item [(5)] All the powers of $I$ have a linear resolution.
\end{itemize}
\end{Theorem}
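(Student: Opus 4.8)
The plan is to prove the chain of equivalences by assembling the cited results for the horizontal implications $(1)\Leftrightarrow(2)\Leftrightarrow(3)$ and supplying the new ingredient $(3)\Rightarrow(4)$, with $(4)\Rightarrow(5)$ being automatic and $(5)\Rightarrow(2)$ trivial. First I would invoke \cite[Theorem 1.3]{ADH}, which characterizes, in exactly the numerical terms listed in $(1)$, when a completely lexsegment ideal $I=(L(u,v))$ has a linear resolution; this gives $(1)\Leftrightarrow(2)$. Next, \cite[Theorem 1.2]{EOS} shows that for such an ideal, having a linear resolution is equivalent to having linear quotients (with respect to the order $\succ$ on $\Mc_d$ recalled in the proof of the preceding theorem), which yields $(2)\Leftrightarrow(3)$. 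These two citations already settle the equivalence of $(1)$, $(2)$, $(3)$.

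For the new content, I would argue $(3)\Rightarrow(4)$ by invoking the theorem proved immediately above in this section: if $I$ is a completely lexsegment ideal with linear quotients, then, after reducing via Remark~\ref{comp} to the cases $(a)$ $b_1<a_1-1$ and $(b)$ $b_1=a_1-1$ with $x_1w/x_{\max(w)}\le_{lex}u$ (which are precisely conditions $(1)(ii)$ and $(1)(iii)$), all powers $I^N$ have linear quotients with respect to the order $\succ$ extended to $\Mc_{Nd}$. The case $(1)(i)$ is handled directly in Remark~\ref{comp}, where it is observed that the ideal in question is (up to the factor $x_1^a$) a power of a final lexsegment ideal generated in degree $d-a$, all of whose powers are known to have linear quotients. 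So in every case covered by $(1)$ we conclude $(4)$.

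Finally, $(4)\Rightarrow(5)$ holds because any equigenerated monomial ideal with linear quotients has a linear resolution (the Herzog--Takayama / Conca--Herzog observation), applied to each $I^N$; and $(5)\Rightarrow(2)$ is the trivial instance $N=1$. Closing the loop $(2)\Rightarrow(1)$ via \cite[Theorem 1.3]{ADH} completes the cycle, so all five statements are equivalent. I do not anticipate a genuine obstacle here, since the substantive work — the stability of linear quotients under taking powers — was already carried out in the theorem preceding this one; the only care needed is to check that the three numerical conditions in $(1)$ match up verbatim with the hypotheses used in that theorem and in \cite{ADH,EOS}, and to make sure the degenerate case $(1)(i)$ is not overlooked when passing to powers.
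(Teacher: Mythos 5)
Your proposal is correct and matches the paper's argument, which likewise obtains the theorem by combining the preceding theorem (all powers of a completely lexsegment ideal with linear quotients have linear quotients, with case (1)(i) handled by Remark~\ref{comp}) with \cite[Theorem 1.3]{ADH} and \cite[Theorem 1.2]{EOS}, plus the standard facts that linear quotients imply a linear resolution and that $(5)\Rightarrow(2)$ is the case $N=1$. No substantive difference in route.
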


\section{Exchange properties and applications}
\label{NCLI}

We first fix some notations. As in the previous section, let $S=K[x_1,\ldots,x_n]$ be the ring of polynomials in $n$ variables over a field $K$ and $\Mc_d$ the set of all monomials of degree $d$ in $S$. If $B\subset\Mc_d$ is a nonempty set, we denote by $K[B]$ the $K$-subalgebra of $S$ generated by the monomials of $B$.

Let $R=K[\{T_u\}_{u\in B}]$ be the polynomial ring in a set of variables indexed over $B$ and $\pi:R\rightarrow K[B]$ the surjective $K$-algebra homomorphism defined by $\pi(T_u)=u$, for all $u\in B$. $J_{K[B]}:=\ker\pi$ is called the \textit{toric ideal} of $K[B]$.

Let $<$ be a monomial order on $R$ and $\ini_{<}(J_{K[B]})$ the initial ideal of $J_{K[B]}$ with respect to $<$. A monomial $T_{u_1}\cdots T_{u_N}\in R$ is a \textit{standard monomial} of $J_{K[B]}$ with respect to $<$ if $T_{u_1}\cdots T_{u_N}\notin\ini_{<}(J_{K[B]})$. We recall the following definition which was given in \cite{HHV}.

\begin{Definition}{\cite[Definition 4.1]{HHV}}\rm$\ $ We say that a nonempty set $B\subset\Mc_d$ satisfies the \textit{$\ell$-exchange property} with respect to a monomial order $<$ on $R$ if $B$ posseses the following property: if $T_{u_1}\cdots T_{u_N}$ and $T_{v_1}\cdots T_{v_N}$ are standard monomials of $J_{K[B]}$ with respect to $<$ such that 
\begin{itemize}
	\item[(a)] $\nu_i(u_1\cdots u_N)=\nu_{i}(v_1\cdots v_N)$ for $1\leq i\leq q-1$ (with $q\leq n-1$),
	\item[(b)] $\nu_q(u_1\cdots u_N)<\nu_{q}(v_1\cdots v_N)$,
\end{itemize}
then there exist $1\leq\delta\leq N$,  and $q<j\leq n$ with  $j\in\supp(u_{\delta})$ and $x_qu_{\delta}/x_j\in B$.
\end{Definition}

Inspired by this definition we consider the following slight generalization.
Let $<_{\sigma}$ be a monomial order on $S$.

\begin{Definition}\rm We say that $B$ satisfies the \textit{$\sigma$-exchange property} with respect to $<$ if $B$ has the following property: if $T_{u_1}\cdots T_{u_N}$ and $T_{v_1}\cdots T_{v_N}$ are standard monomials of $J_{K[B]}$ with respect to $<$ such that $u_1\cdots u_{N}<_{\sigma}v_{1}\cdots v_N$, then there exist $1\leq\delta\leq N$, $q\in\supp(v_1\cdots v_N)$, and $j\in\supp(u_{\delta})$ such that
\begin{itemize}
	\item[(i)] $\nu_q(u_1\cdots u_N)<\nu_{q}(v_1\cdots v_N)$,
	\item[(ii)] $x_j<_{\sigma}x_q$,
	\item[(iii)] $x_qu_{\delta}/x_j\in B$. 
\end{itemize}
\end{Definition}

It is straightforward to show that if $B$ satisfies the $\ell$-exchange property with respect to a monomial order $<$ on $R$, then $B$ satisfies the $\sigma$-exchange property with respect to $<$ for $<_{\sigma}= <_{lex}$ on $S$ with $x_1>_{lex}\cdots>_{lex}x_n$.

\begin{Example}\label{explu}\rm Let $<_{\sigma}$ be a monomial order on $S$ defined as follows. For $m,m'$  monomials in $S$, we set $m<_{\sigma} m'$ if $\deg(m)<\deg(m')$ or $\deg(m)=\deg(m')$ and $m>_{revlex}m'$, that is, if $m=x_1^{a_1}\cdots x_n^{a_n}$, $m'=x_1^{b_1}\cdots x_n^{b_n}$, then there exists some $1\leq s\leq n$ such that $a_n=b_n$, $a_{n-1}=b_{n-1}, \ldots,a_{s+1}=b_{s+1}$, and $a_s<b_s$. In particular, we have 
$x_n >_{\sigma} x_{n-1}>_{\sigma}\cdots >_{\sigma} x_1$. We  call this monomial order the decreasing revlexicographical order on $S.$

Any final lexsegment set $L^f(v)$, $v\in\Mc_d,$ satisfies the $\sigma$-exchange property for $<_{\sigma}$ as above, with respect to any monomial order $<$ on $R=K[\{T_w:w\in L^f(v)\}]$. In order to prove this claim, let $T_{u_1}\cdots T_{u_N}$ and $T_{v_1}\cdots T_{v_N}$ be two standard monomials of $J_{K[B]}$ with respect to $<$ such that $u_1\cdots u_N<_{\sigma} v_1\cdots v_N$, that is 
	\[u_1\cdots u_N>_{revlex} v_1\cdots v_N.
\]
Then there exists $1\leq q\leq n$ such that $\nu_i(u_1\cdots u_N)=\nu_{i}(v_1\cdots v_N)$ for all $i\geq q+1$ and $\nu_q(u_1\cdots u_N)<\nu_{q}(v_1\cdots v_N)$. Since $\deg(u_1\cdots u_N)=\deg(v_1\cdots v_N)$, we must have at least an index $j<q$ such that $\nu_j(u_1\cdots u_N)>\nu_j(v_1\cdots v_N)$. Let $1\leq \delta\leq N$ be such that $j\in\supp(u_\delta)$. Then the following conditions hold: $x_j>_{revlex}x_q$, that is $x_j<_{\sigma}x_q$ and $x_qu_{\delta}/x_j<_{lex}u_{\delta}$, whence $x_qu_{\delta}/x_j\in L^f(v)$.
\end{Example}

We also notice that, if we choose $<$ on $R$ to be the monomial order given in the previous section, that is the lexicographical order on the monomials $\{T_w\ :\ w\in L^f(v)\}$ induced by $T_{w_1}>T_{w_2}$ if $w_1>_{lex}w_2$, then $L^f(v)$ does not satisfy the $\ell$-exchange property with respect to $<$. For example, let $v=x_1x_3x_4\in K[x_1,x_2,x_3,x_4]$. Let $u_1=x_2^3$ and $v_1=x_1x_3x_4$, $u_1,v_1\in L^f(v).$ Then $\left(T_{u_1}\right)^2$ and $\left(T_{v_1}\right)^2$ are standard monomials with respect to $<$ on $R=K[\{T_w: w\in L^f(v)\}]$ and $u_1^2<_{lex}v_1^2$. In the $\ell$-exchange property, we have to take $q=1$. Since $\supp(u_1)=\{2\}$, we should have $x_1u_1/x_2=x_1x_2^2\in L^f(v)$, which is not possible.

Following closely the ideas from the last section in \cite{HHV}, we may prove a slight generalization of \cite[Theorem 5.1]{HHV}.

Let $I\subset S$ be a monomial ideal generated in degree $d$ and let $B=G(I)$ its minimal monomial generating set. Let $\Tc=S[\{T_u\}_{u\in B}]=K[x_1,\ldots,x_n,T_u:u\in B ]$ be the polynomial ring over $K$. $\Tc$ is bigraded by $\deg (x_i) =(1,0)$ for all $1\leq i\leq n$ and $\deg(T_u)=(0,1)$ for all $u\in B$.

Let $\Rc(I)=\bigoplus\limits_{j\geq0}I^jt^j=S[\{ut\}_{u\in B}]\subset S[t]$ be the Rees ring of $I$. $\Rc(I)$ is also naturally bigraded by $\deg(x_i)=(1,0)$ for $1\leq i\leq n$ and $\deg(ut)=(0,1)$ for all $u\in B$. There exists a canonical bigraded surjective $K$-algebra homomorphism $\varphi:\Tc\rightarrow\Rc(I)$ defined b $\varphi(x_i)=x_i$ for $1\leq i\leq n$ and $\varphi(T_u)=ut$ for all $u\in B$. Let $P_{\Rc(I)}:=\ker\varphi$ be the toric ideal of $\Rc(I)$. $P_{\Rc(I)}$ is bihomogeneous and generated by irreducible bihomogeneous binomials of $\Tc$. Let $<^{\#}$ be an arbitrary monomial order on $R$ and $<_{\sigma}$ be an arbitrary monomial order on $S$. By $<_{\sigma}^\#$ we will denote the product of these two orders which is a monomial order on $\Tc$. More precisely, for $mT_{u_1}\cdots T_{u_N}$, $m'T_{v_1}\cdots T_{v_N}$, monomials in $\Tc$, with $m,m'$ monomials in $S$, we have $mT_{u_1}\cdots T_{u_N}<_{\sigma}^{\#}m'T_{v_1}\cdots T_{v_N}$ if $m<_{\sigma}m'$ or $m=m'$ and $T_{u_1}\cdots T_{u_N}<^{\#}T_{v_1}\cdots T_{v_N}$. 

The following theorem generalizes  \cite[Theorem 5.1]{HHV}.

\begin{Theorem}\label{GB} Let $I\subset S$ be a monomial ideal generated in degree $d$, $B=G(I)$, $<^\#$ a monomial order on $R$ and $<_{\sigma}$ a monomial order on $S$. Let $G_{<^\#}(J_{K[B]})$ be the reduced Gr\"obner basis of the toric ideal $J_{K[B]}$ with respect to $<^\#$. Suppose that $B$ satisfies the $\sigma$-exchange property with respect to $<^{\#}$. Then the reduced Gr\"obner basis of the toric ideal $P_{\Rc(I)}$ with respect to $<_{\sigma}^\#$ consists of all binomials belonging to $G_{<^\#}(J_{K[B]})$ together with the binomials of the form
	\[x_iT_u-x_jT_v\in P_{\Rc(I)}
\]
where $x_j$ is the smallest variable with respect to $<_{\sigma}$ such that $x_i>_{\sigma}x_j$ and $x_iu/x_j\in~B$.
\end{Theorem}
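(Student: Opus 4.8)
The plan is to show that the displayed set of binomials --- call it $\mathcal{H}$, the union of $G_{<^{\#}}(J_{K[B]})$ with the linear binomials $x_iT_u-x_jT_v$ from the statement --- is \emph{the} reduced Gr\"obner basis of $P_{\Rc(I)}$ with respect to $<_{\sigma}^{\#}$. Since a reduced Gr\"obner basis is unique, it suffices to establish: \textbf{(a)} $\mathcal{H}\subseteq P_{\Rc(I)}$; \textbf{(b)} $\mathcal{H}$ is a Gr\"obner basis, i.e.\ $\ini_{<_{\sigma}^{\#}}(P_{\Rc(I)})$ is generated by the leading monomials of the elements of $\mathcal{H}$; and \textbf{(c)} $\mathcal{H}$ is reduced. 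One first records that for $h\in G_{<^{\#}}(J_{K[B]})$ one has $\ini_{<_{\sigma}^{\#}}(h)=\ini_{<^{\#}}(h)$, since $h$ lies in $R$, while $\ini_{<_{\sigma}^{\#}}(x_iT_u-x_jT_v)=x_iT_u$ because $x_i>_{\sigma}x_j$. Step (a) is then routine: a binomial of $J_{K[B]}$ encodes an equality $u_1\cdots u_N=v_1\cdots v_N$ of monomials in $S$, hence lies in $\ker\varphi=P_{\Rc(I)}$; and $x_iu=x_jv$ by construction, so $\varphi(x_iT_u-x_jT_v)=(x_iu-x_jv)t=0$.

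For step (b) I would use the standard toric counting argument on the fibres of $\varphi$. Let $\mathcal{N}$ be the set of monomials of $\Tc$ divisible by none of the leading monomials $\ini_{<_{\sigma}^{\#}}(h)$, $h\in\mathcal{H}$. Since $(\mathcal{H})\subseteq P_{\Rc(I)}$, the division algorithm modulo $\mathcal{H}$ shows $\{\varphi(w):w\in\mathcal{N}\}$ spans $\Rc(I)$; on the other hand the standard monomials of $P_{\Rc(I)}$ lie among the elements of $\mathcal{N}$ (as $\mathcal{H}\subseteq P_{\Rc(I)}$) and their residues form a $K$-basis of $\Tc/P_{\Rc(I)}\cong\Rc(I)$, so it is enough to prove that $\varphi$ is injective on $\mathcal{N}$ --- then $\mathcal{N}$ must equal the set of standard monomials of $P_{\Rc(I)}$, which is (b). So suppose $w=mT_{u_1}\cdots T_{u_N}$ and $w'=m'T_{v_1}\cdots T_{v_N}$ are distinct monomials of $\Tc$ with $\varphi(w)=\varphi(w')\ne 0$, i.e.\ $mu_1\cdots u_N=m'v_1\cdots v_N$, and assume both lie in $\mathcal{N}$; I aim for a contradiction. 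Then $T_{u_1}\cdots T_{u_N}$ and $T_{v_1}\cdots T_{v_N}$ are standard monomials of $J_{K[B]}$ with respect to $<^{\#}$, as otherwise one of them would be divisible by some $\ini_{<^{\#}}(h)=\ini_{<_{\sigma}^{\#}}(h)$, contradicting $w,w'\in\mathcal{N}$. If $u_1\cdots u_N=v_1\cdots v_N$, then $m=m'$ and the two \emph{distinct} pure $T$-monomials are congruent modulo $J_{K[B]}$, so the larger one lies in $\ini_{<^{\#}}(J_{K[B]})=(\ini_{<^{\#}}(h):h\in G_{<^{\#}}(J_{K[B]}))$ --- contradiction. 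Otherwise, say $u_1\cdots u_N<_{\sigma}v_1\cdots v_N$; the $\sigma$-exchange property yields $\delta$, $q$, $j$ with $\nu_q(u_1\cdots u_N)<\nu_q(v_1\cdots v_N)$, $x_j<_{\sigma}x_q$ and $x_qu_\delta/x_j\in B$. Comparing the exponent of $x_q$ in $mu_1\cdots u_N=m'v_1\cdots v_N$ gives $\nu_q(m)=\nu_q(m')+\nu_q(v_1\cdots v_N)-\nu_q(u_1\cdots u_N)>0$, so $x_q\mid m$; and since $x_qu_\delta/x_j\in B$, the set of variables admissible for a linear binomial with leading monomial $x_qT_{u_\delta}$ is non-empty, so $\mathcal{H}$ contains such a binomial, whose leading monomial $x_qT_{u_\delta}$ divides $w$ --- contradicting $w\in\mathcal{N}$. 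The case $u_1\cdots u_N>_{\sigma}v_1\cdots v_N$ is symmetric, using $w'$. This proves (b).

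For step (c): each $h\in G_{<^{\#}}(J_{K[B]})$ is reduced within $G_{<^{\#}}(J_{K[B]})$ by hypothesis, and since every monomial of such an $h$ involves only the $T$-variables while every linear binomial carries an $x_i$ in its leading monomial, the two families do not interfere. For a linear binomial $x_iT_u-x_jT_v\in\mathcal{H}$, where $x_j$ is the smallest variable with $x_i>_{\sigma}x_j$ and $x_iu/x_j\in B$: both of its monomials have $T$-degree $1$, hence are divisible by no leading monomial of $G_{<^{\#}}(J_{K[B]})$, and its leading monomial $x_iT_u$ is divisible by no leading monomial of another linear binomial (that would force the same $x_i$ and the same $T_u$, hence the same binomial, by the ``smallest variable'' prescription). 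The only remaining possibility to exclude is that the tail $x_jT_v$ is the leading monomial of another linear binomial, i.e.\ that there is $j'$ with $x_{j'}<_{\sigma}x_j$ and $x_jv/x_{j'}\in B$; but $x_jv/x_{j'}=x_iu/x_{j'}$ and $x_{j'}<_{\sigma}x_j<_{\sigma}x_i$, which contradicts the minimality of $x_j$. Hence $\mathcal{H}$ is reduced, and by uniqueness it is the reduced Gr\"obner basis of $P_{\Rc(I)}$ with respect to $<_{\sigma}^{\#}$.

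I expect the heart of the proof to be the fibre argument in step (b): that is the only place the $\sigma$-exchange hypothesis is used, and its one delicate point is the exponent comparison forcing $x_q\mid m$, which is what makes the relevant linear binomial actually divide $w$. The reducedness check in (c) is elementary but must be run with the precise ``smallest variable'' choice, and otherwise the argument simply follows the pattern of the proof of \cite[Theorem 5.1]{HHV}.
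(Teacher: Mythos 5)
Your proof is correct and, at its core, identical to the paper's: both arguments apply the $\sigma$-exchange property to a relation $mu_1\cdots u_N=m'v_1\cdots v_N$ between standard $T$-monomials with $u_1\cdots u_N<_{\sigma}v_1\cdots v_N$ in order to extract a variable $x_q$ dividing the $x$-part of the larger side together with a linear binomial of $P_{\Rc(I)}$ whose leading term $x_qT_{u_\delta}$ divides it, and both deduce reducedness from the minimality of $x_j$ exactly as you do. The only difference is cosmetic, namely the certificate used for the Gr\"obner property: the paper takes an arbitrary irreducible binomial of $P_{\Rc(I)}$, reduces its $T$-parts modulo $G_{<^{\#}}(J_{K[B]})$ and shows its initial term is divisible by a candidate leading term, whereas you run the equivalent fibre-injectivity (counting) argument on the monomials avoiding the candidate initial ideal.
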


\begin{proof} We closely follow the ideas from the proof of  \cite[Theorem 5.1]{HHV}.

We first show that the set
	\[G=G_{<^\#}(J_{K[B]})\cup\{x_iT_u-x_jT_v\in P_{\Rc(I)}\ : \ x_i>_{\sigma}x_j\}
\]
is a Gr\"obner basis of $P_{\Rc(I)}$ with respect to $<_{\sigma}^\#$.

Let $f\in P_{\Rc(I)}\subset \Tc$ be an irreducible binomial. If $\ini_{<_{\sigma}^\#}(f)\in R$, then $f\in P_{\Rc(I)}\cap R=J_{K[B]}$, hence there is a binomial belonging to $G_{<^\#}(J_{K[B]})$ which divides $\ini_{<_{\sigma}^\#}(f)$. 

Let $\ini_{<_{\sigma}^\#}(f)\notin R$, that is, we may write
	\[f=x_{i_1}\cdots x_{i_t}T_{u_1}\cdots T_{u_N}-x_{j_1}\cdots x_{j_t}T_{v_1}\cdots T_{v_N}
\]
with $\{i_1,\ldots,i_t\}\cap\{j_1,\ldots,j_t\}=\emptyset$ and where we assume that $x_{i_1}\geq_{\sigma}\cdots\geq_{\sigma}x_{i_t}$ and $x_{j_1}\geq_{\sigma}\ldots\geq_{\sigma} x_{j_t}$. By successively reductions modulo the binomials from $G_{<^\#}(J_{K[B]})$ we may assume that $T_{u_1}\cdots T_{u_N}$ and $T_{v_1}\cdots T_{v_N}$ are standard monomials with respect to $<^\#$. Let $\ini_{<_{\sigma}^\#}(f)=x_{i_1}\cdots x_{i_t}T_{u_1}\cdots T_{u_N}$. Then $x_{i_1}\cdots x_{i_t}>_{\sigma}x_{j_1}\cdots x_{j_t}$. By using the equality 
\[x_{i_1}\cdots x_{i_t}u_1\cdots u_N=x_{j_1}\cdots x_{j_t}v_1\cdots v_N,
\]
we obtain $u_1\cdots u_N<_{\sigma}v_1\cdots v_N$, $\nu_{i_s}(u_1\cdots u_N)<\nu_{i_s}(v_1\cdots v_N)$ for $1\leq s\leq t,$ and $\nu_k(u_1\cdots u_N)\geq\nu_k(v_1\cdots v_N)$ for all $k\notin\{i_1,\ldots, i_t\}$. Since $B$ satisfies the $\sigma$-exchange property, we have that there exist $1\leq \delta\leq N$, $j\in\supp(u_{\delta})$ and $q\in\supp(v_1\cdots v_N)$ such that $\nu_q(u_1\cdots u_N)<\nu_q(v_1\cdots v_N)$, $x_j<_{\sigma}x_q$, and $x_qu_{\delta}/x_j\in B$. 

The first above condition on $q$ shows that $q=i_s$, for some $1\leq s\leq t$. Therefore we have $x_{i_s}u_{\delta}=x_{j}v$ for some $v\in B$ and the proof of our claim is finished.

To end the proof, let us take some binomial $x_iT_u-x_jT_v$, where $u,v\in B$, $x_iu=x_jv$ and $x_j<_{\sigma}x_i$ is the smallest variable with respect to $<_{\sigma}$ such that $x_iu/x_j\in B$.
Assume that $x_jT_v$ is not reduced, hence there exists some binomial $x_jT_v-x_{l}T_w$ with $x_l<_{\sigma}x_j,$ which belongs to   $ P_{\Rc(I)}$. Then $x_iT_u-x_lT_w\in P_{\Rc(I)}$ and $x_{l}<_{\sigma}x_j<_{\sigma}x_i$, a contradiction. 
\end{proof}

\begin{Corollary}\label{corfiber} Let $I\subset S$ be a monomial ideal generated in degree $d$ and $B=G(I)$. Let $<^\#$ be a monomial order on $R$ and $<_{\sigma}$ a monomial order on $S$. If $B$ satisfies the $\sigma$-exchange property with respect to $<^\#$, then $I$ is of fiber type.
\end{Corollary}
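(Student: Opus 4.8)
The statement is an immediate consequence of Theorem~\ref{GB}. Recall that $I$ is said to be \emph{of fiber type} precisely when the toric ideal $P_{\Rc(I)}\subset\Tc$ is generated by the fiber relations --- the binomials of $J_{K[B]}$ regarded inside $\Tc$, which are homogeneous of bidegree $(0,\ast)$ --- together with the relations of the symmetric algebra $\mathcal{S}(I)$ --- the binomials of bidegree $(\ast,1)$, that is, those that are linear in the variables $T_u$. Since being of fiber type does not depend on any choice of monomial order, the plan is simply to fix an arbitrary monomial order $<^\#$ on $R$ and an arbitrary monomial order $<_{\sigma}$ on $S$, form the product order $<_{\sigma}^\#$ on $\Tc$, and invoke Theorem~\ref{GB}.

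Since $B=G(I)$ satisfies the $\sigma$-exchange property with respect to $<^\#$ by hypothesis, Theorem~\ref{GB} describes the reduced Gr\"obner basis $G$ of $P_{\Rc(I)}$ with respect to $<_{\sigma}^\#$; in particular $G$ is a generating set of $P_{\Rc(I)}$. I would then examine the two kinds of elements of $G$. The binomials belonging to $G_{<^\#}(J_{K[B]})$ lie in $J_{K[B]}$ and are differences of monomials in the $T_u$ alone, hence homogeneous of bidegree $(0,\ast)$: these are fiber relations. The remaining elements of $G$ have the shape $x_iT_u-x_jT_v$ with $x_iu=x_jv$ in $S$; such a binomial is homogeneous of bidegree $(1,1)$ and linear in the $T$-variables, so it is one of the defining relations of the symmetric algebra $\mathcal{S}(I)$, namely the one corresponding to the linear syzygy $x_iu-x_jv=0$ among the generators of $I$.

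Putting these observations together, $P_{\Rc(I)}$ is generated by binomials of bidegree $(0,\ast)$ and by binomials of bidegree $(\ast,1)$, which is exactly the statement that $I$ is of fiber type. I expect no genuine obstacle here beyond bookkeeping: the only points that deserve a word of justification are that a reduced Gr\"obner basis is in particular an ideal generating set, and that the binomials $x_iT_u-x_jT_v$ really are among the defining relations of $\mathcal{S}(I)$; both are clear once one keeps track of the bidegrees.
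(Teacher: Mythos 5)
Your argument is correct and is exactly the (implicit) argument of the paper: the corollary is deduced directly from Theorem~\ref{GB} by observing that the Gr\"obner basis it produces consists of fiber relations (the elements of $G_{<^\#}(J_{K[B]})$, of bidegree $(0,\ast)$) together with binomials $x_iT_u-x_jT_v$ of bidegree $(1,1)$, which are relations of the symmetric algebra. No discrepancy with the paper's reasoning.
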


We recall (see \cite{HHV}) that an ideal $I\subset S$ is called \textit{of fiber type} if the fiber relations together with the relations of the symmetric algebra of $I$ generate all the relations of the Rees algebra of $I.$

The above corollary may be used to find equigenerated monomial ideals of fiber type. 
Let $<_{\sigma}$ be an arbitrary graded monomial order on $S$, $u\in\Mc_d$ and $I=(L^i_{<_{\sigma}}(u))$, where $L^i_{<_{\sigma}}(u)=\{w\in\Mc_d:w>_{\sigma}u\}$. Then it is easily seen that $(L^i_{<_{\sigma}}(u))$ satisfies the $\sigma$-exchange property for any monomial order on $R=K[\{T_w:w\in L^i_{<_{\sigma}}(u)\}]$, hence $I$ is of fiber type.

We prove now a significant property of the monomial ideals whose minimal monomial generating system satisfies a $\sigma$-exchange property.

\begin{Theorem}\label{powersexchange} Let $I\subset S$ be a monomial ideal generated in degree $d$ and $B=G(I)$. Let $<^\#$ be a monomial order on $R=K[\{T_u:u\in B\}]$ and $<_{\sigma}$ a monomial order on $S$. If $B$ satisfies the $\sigma$-exchange property with respect to $<^{\#}$, then $I^N$ has linear quotients with respect to $>_{\sigma}$ for  $N\geq1$.
\end{Theorem}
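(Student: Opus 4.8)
The plan is to deduce linear quotients from the $\sigma$-exchange property in a single exchange step, with no iteration needed. First I would record the (trivial but essential) observation that, since $I$ is generated in degree $d$, the power $I^N$ is generated in degree $Nd$; hence every product $u_1\cdots u_N$ with $u_1,\ldots,u_N\in B$ is automatically a \emph{minimal} monomial generator of $I^N$, and $G(I^N)=\{u_1\cdots u_N : u_i\in B\}$. Order $G(I^N)$ by decreasing $<_\sigma$. By the standard characterization of linear quotients for monomial ideals, it then suffices to prove: for all $w,w'\in G(I^N)$ with $w<_\sigma w'$ there exist $w''\in G(I^N)$ and a variable $x_q$ such that $w''>_\sigma w$, $w''/\gcd(w'',w)=x_q$, and $x_q\mid w'/\gcd(w',w)$.

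To produce $w''$ I would first pass to standard representations, exactly as in the proof of Theorem~\ref{GB}: the elements of $G_{<^\#}(J_{K[B]})$ are binomials homogeneous in the $T$-variables (because $I$ is equigenerated), so reducing $T_{u_1}\cdots T_{u_N}$ and $T_{v_1}\cdots T_{v_N}$ modulo $G_{<^\#}(J_{K[B]})$ keeps the number of factors equal to $N$ and does not change the images in $S$. Thus we may assume $w=u_1\cdots u_N$ and $w'=v_1\cdots v_N$ with $T_{u_1}\cdots T_{u_N}$ and $T_{v_1}\cdots T_{v_N}$ standard monomials of $J_{K[B]}$ with respect to $<^\#$. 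Since $w<_\sigma w'$, the $\sigma$-exchange property now yields $1\leq\delta\leq N$, $q\in\supp(v_1\cdots v_N)$ and $j\in\supp(u_\delta)$ with $\nu_q(w)<\nu_q(w')$, $x_j<_\sigma x_q$, and $x_qu_\delta/x_j\in B$.

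Finally I would set $w''=x_q\,w/x_j=u_1\cdots u_{\delta-1}\,(x_qu_\delta/x_j)\,u_{\delta+1}\cdots u_N$; since $x_j\mid u_\delta$ and $x_qu_\delta/x_j\in B$, this is again a product of $N$ elements of $B$, so $w''\in G(I^N)$. The three required properties are then immediate. We have $j\neq q$ because $x_j<_\sigma x_q$; writing $w=(w/x_j)x_j$ and $w''=(w/x_j)x_q$ and using that $<_\sigma$ is a monomial order gives $w''>_\sigma w$. The monomials $w$ and $w''$ differ only in the exponents of $x_j$ and $x_q$, so $w''/\gcd(w'',w)=x_q$. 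And $\nu_q(w)<\nu_q(w')$ gives $\nu_q\bigl(w'/\gcd(w',w)\bigr)=\nu_q(w')-\nu_q(w)\geq1$, i.e.\ $x_q\mid w'/\gcd(w',w)$. This verifies the criterion and proves the theorem.

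I expect the only genuine subtleties to lie in the set-up rather than in any computation: first, equigeneration, which is exactly what guarantees that the exchanged monomial $w''$ lands back in the \emph{minimal} generating set $G(I^N)$, so that no separate minimality argument is needed; and second, the legitimacy of working with standard representations, i.e.\ the fact that each generator of $I^N$ is the image of a standard monomial of $R$-degree $N$, which is the same fact already used in the reduction step in the proof of Theorem~\ref{GB}. Once those two points are in place, the $\sigma$-exchange property was tailored to hand over precisely the single-variable replacement demanded by the linear-quotients criterion, and there is essentially nothing left to do.
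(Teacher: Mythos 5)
Your proof is correct and follows essentially the same route as the paper: choose standard representations of the generators of $I^N$, apply the $\sigma$-exchange property once to the pair $w<_\sigma w'$, and take $w''=x_qw/x_j$ as the witness for the linear-quotients criterion. You fill in some details the paper leaves implicit (homogeneity of $J_{K[B]}$ justifying the reduction to standard monomials, and the verification that $w''>_\sigma w$ and $w''/\gcd(w'',w)=x_q$), but the argument is the same.
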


\begin{proof} Let $G\left(I^N\right)=\{w_1,\cdots,w_r\}$, where $w_1>_{\sigma}\cdots>_{\sigma}w_r$ and let $T_{w_1}, \ldots,T_{w_r}$ be standard monomials of $J_{K[B]}$ with respect to $<^\#$. Let $1\leq j<i\leq r$ be two integers and assume that $w_j=v_1\cdots v_N$ and $w_i=u_1\cdots u_N$ for $u_1,\ldots,u_N,v_1,\ldots,v_N\in G(I)$, $u_1\geq_{\sigma}\cdots\geq_{\sigma}u_N$, $v_1\geq_{\sigma}\cdots\geq_{\sigma}v_N$. We have to prove that there exist $1\leq k<i$ and $1\leq q\leq n$ such that 
	\[\frac{w_k}{\gcd(w_k,w_i)}=x_q\mbox{ and } x_q\mid\frac{w_j}{\gcd(w_j,w_i)} .
\]
Since $w_j>_{\sigma}w_i$, by using the $\sigma$-exchange property of $B$, there exist $1\leq \delta\leq N$, $l\in\supp(u_{\delta})$, and $q\in\supp(v_1\cdots v_N)$ such that $\nu_q(u_1\cdots u_N)<\nu_q(v_1\cdots v_N)$, $x_l<_{\sigma}x_q$, and $x_qu_{\delta}/x_l\in B$. Let 
\[w_k=u_1\cdots u_{\delta-1}\frac{x_qu_{\delta}}{x_l}u_{\delta+1}\cdots u_N=\frac{x_qw_i}{x_l}.
\]
Then $w_{k}$ satisfies the required conditions.
\end{proof}

In the sequel we show that the lexsegment ideals with a linear resolution which are not completely  satisfies an exchange property.

We first recall the following
\begin{Theorem}[\cite{ADH}] Let $I=(L(u,v))$ be a lexsegment ideal with $x_1\mid u$ and $x_1\nmid v$ which is not a completely lexsegment ideal. Then $I$ has a linear resolution if and only if $u$ and $v$ have the following form:
	\[u=x_1x_{l+1}^{a_{l+1}}\cdots x_n^{a_n}\ \mbox{and } v=x_lx_{n}^{d-1}
\]
for some $l$, $2\leq l\leq n-1$.
\end{Theorem}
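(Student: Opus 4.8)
The plan is to reduce the assertion to a regularity computation by splitting the generating lexsegment according to divisibility by $x_1$. Write $v=x_1^{b_1}\cdots x_n^{b_n}$, so $b_1=0$. The minimal generators of $I$ divisible by $x_1$ form the set $x_1\{w\in\Mc_{d-1}:w\le_{\lex}u/x_1\}$, while those not divisible by $x_1$ form $\{w\in\Mc_d:x_1\nmid w,\ w\ge_{\lex}v\}$, which, because $x_1\nmid v$, is an initial lexsegment $L^i(v)$ inside $K[x_2,\dots,x_n]$. Put $I'=(L^i(v))$ and let $J$ be the final lexsegment ideal generated by $\{w\in\Mc_{d-1}:w\le_{\lex}u/x_1\}$. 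Since $u=x_1^d$ gives a completely lexsegment ideal and is therefore excluded here, one checks from the combinatorial description of completely lexsegment sets (\cite{HM}; see also \cite{ADH}) that an $L(u,v)$ with $x_1\mid u$, $x_1\nmid v$ which is not completely lexsegment must satisfy $\nu_1(u)=1$ and $\min(u/x_1)>\min(v)$; hence, with $l:=\min(v)$, we already have $u=x_1x_{l+1}^{a_{l+1}}\cdots x_n^{a_n}$ (possibly $a_{l+1}=0$) with $2\le l\le n-1$ (from $x_1\nmid v$ and $\min(u/x_1)\le n$), all minimal generators of $J$ lie in $K[x_2,\dots,x_n]$, and $I=x_1J+I'$. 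The short exact sequence
\[
0\longrightarrow x_1J\cap I'\longrightarrow (x_1J)\oplus I'\longrightarrow I\longrightarrow 0,
\]
together with $\reg I'=d$ (an initial lexsegment has a linear resolution), $\reg(x_1J)=\reg J+1=d$ ($J$ is a final lexsegment, hence completely lexsegment with linear resolution by Theorem~\ref{linear} in the relevant subring), and $x_1J\cap I'=x_1(J\cap I')$ (all minimal generators of $J$ and of $I'$ are coprime to $x_1$), yields $\reg I\le\max\{d,\reg(J\cap I')\}$ and, whenever $\reg(J\cap I')\ge d$, also $\reg(J\cap I')\le\reg I$. Thus $I$ has a linear resolution if and only if $\reg(J\cap I')\le d$, and it remains to show that the latter holds exactly when $v=x_lx_n^{d-1}$.

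For sufficiency, suppose $v=x_lx_n^{d-1}$. Then $I'=(x_2,\dots,x_l)(x_2,\dots,x_n)^{d-1}$, and an exchange argument (when a minimal generator $h\in G(J)$ carries a variable of index $\le l$, replace it by a larger one to stay inside $L(u,v)/x_1$) shows that $J\cap I'=(x_2,\dots,x_l)\,J$, which is generated in degree $d$. Since $J\subseteq K[x_{l+1},\dots,x_n]$ involves variables disjoint from $x_2,\dots,x_l$, the ideal $(x_2,\dots,x_l)\,J$ is a product over disjoint variable sets, so $\reg(J\cap I')=1+(d-1)=d$ and $I$ has a linear resolution. Alternatively one checks directly that $I$ has linear quotients with respect to the order $\succ$ of \cite[Theorem~1.2]{EOS} (compare $\nu_1$, then break ties lexicographically): the only new point beyond linear quotients of the two pieces $x_1J$ and $I'$ is that the colon ideal of the generators preceding a non-$x_1$ generator stays generated by variables, and this reduces to the same bookkeeping on $\min$ and $\max$ used in Section~\ref{CLI}.

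For necessity, suppose $\reg(J\cap I')\le d$ but $v\ne x_lx_n^{d-1}$ with $l=\min(v)$. Since $x_n^{d-1}\in G(J)$ and $v\in G(I')$, the ideal $J\cap I'$ contains $\operatorname{lcm}(x_n^{d-1},v)$; a direct estimate gives $\deg\operatorname{lcm}(x_n^{d-1},v)=d$ exactly when $v=x_jx_n^{d-1}$ for some $j$ (and the case $v=x_n^d$ cannot occur under our hypotheses), so $v\ne x_lx_n^{d-1}$ forces $\deg\operatorname{lcm}(x_n^{d-1},v)\ge d+1$. One then shows that this least common multiple is a minimal generator of $J\cap I'$: a degree-$d$ element of $J\cap I'$ must be of the form $x_jh$ with $h\in G(J)$ and $x_jh\in G(I')$, and the combinatorics of $L^i(v)$ together with $h$ lying in a final lexsegment preclude such an $x_jh$ dividing $\operatorname{lcm}(x_n^{d-1},v)$. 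Hence $\reg(J\cap I')\ge d+1$, a contradiction. Therefore $v=x_lx_n^{d-1}$, and with the bounds already noted $(u,v)$ has the asserted form.

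The main obstacle is this last verification: proving that $\operatorname{lcm}(x_n^{d-1},v)$ really is a minimal generator of $J\cap I'$ when $v$ is not of the special form, and, more robustly, computing $\reg(J\cap I')$ from the combinatorics of the two overlapping lexsegments $J$ and $I'$. The most reliable route is probably to give generators and relations of $J\cap I'$ outright, or to strip off the variables $x_2,\dots,x_l$ one at a time via $\reg M=\max\{\reg(M+(x_j)),\ \reg(M:x_j)+1\}$ and compare with the regularities of lexsegment ideals in $K[x_2,\dots,x_n]$ already supplied by Theorem~\ref{linear}. The auxiliary reduction $\nu_1(u)=1$, $\min(u/x_1)>\min(v)$ likewise rests on the precise completely-lexsegment criterion, for which I would lean on \cite{HM} and \cite{ADH}.
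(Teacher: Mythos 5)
The paper does not contain a proof of this statement; it is quoted verbatim from \cite{ADH}, so there is no in-text argument to compare against. Evaluated on its own, your proposal is a reasonable plan of attack but has two genuine gaps, one of which you acknowledge yourself.

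First, the preliminary reduction is stated too strongly. You claim that for any $L(u,v)$ with $x_1\mid u$, $x_1\nmid v$ that is not completely lexsegment one must have $\nu_1(u)=1$ \emph{and} $\min(u/x_1)>\min(v)$. The first half is correct (if $\nu_1(u)\ge 2$ the shadow obstruction $x_1 m'/x_j$ always has $x_1$-degree $1<\nu_1(u)$, so the criterion for completeness is automatic). The second half is false: take $n=4$, $d=3$, $u=x_1x_2x_4$, $v=x_2^3$. Here $x_1x_2^2x_3$ lies in $L(x_1u,x_nv)$ but is not in the shadow of $L(u,v)$ (none of $x_2^2x_3$, $x_1x_2x_3$, $x_1x_2^2$ lies in $L(u,v)$), so the ideal is not completely lexsegment, yet $\min(u/x_1)=2=\min(v)$. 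Thus the variable-disjointness of $J$ from $x_2,\dots,x_l$ which your decomposition $I=x_1J+I'$ exploits does \emph{not} follow from non-completeness alone. It does follow once you also assume $v=x_lx_n^{d-1}$ (i.e.\ in the sufficiency direction: $m'=x_{l+1}^d$ is then the worst witness in the completeness test, and non-completeness forces $u<_{\lex}x_1x_{l+1}^{d-1}$, hence $\min(u/x_1)>l$), but in the necessity direction you cannot invoke it, and the counterexample above is exactly of that type.

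Second, and more fundamentally, your necessity argument is incomplete exactly where you flag it yourself: you need that when $v\neq x_lx_n^{d-1}$, the monomial $\operatorname{lcm}(x_n^{d-1},v)$ is a \emph{minimal} generator of $J\cap I'$, and you leave this unproved (``The main obstacle is this last verification\ldots''). That verification is the heart of the ``only if'' direction; without it, nothing precludes $J\cap I'$ from being generated in degree $d$ even though $v$ is not of the required form, and the regularity conclusion $\reg(J\cap I')\ge d+1$ does not follow merely from the existence of some element of degree $d+1$. With the reduction above removed from general use, you would in addition have to handle $J$ possibly meeting $\{x_2,\dots,x_l\}$, which changes the structure of $J\cap I'$. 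So as written the proposal establishes sufficiency (modulo tightening the sketchy ``exchange argument'' to the direct computation $J\cap I'=(x_2,\dots,x_l)J$), but does not establish necessity.
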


\begin{Theorem} Let $<_{\sigma}$  be the decreasing revlexicographical order on $S$ and $I=(L(u,v))$ a lexsegment ideal with linear resolution which is not a completely lexsegment ideal. Then $L(u,v)$ satisfies the $\sigma$-exchange property with respect to any monomial order on $R=K[\{T_w:w\in L(u,v)\}]$.
\end{Theorem}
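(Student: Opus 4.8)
The plan is to prove that $L(u,v)$ satisfies the $\sigma$-exchange property directly from the definition, using the explicit form $u=x_1x_{l+1}^{a_{l+1}}\cdots x_n^{a_n}$, $v=x_lx_n^{d-1}$ guaranteed by the preceding theorem of \cite{ADH}. So let $T_{u_1}\cdots T_{u_N}$ and $T_{v_1}\cdots T_{v_N}$ be standard monomials of $J_{K[L(u,v)]}$ with $u_1\cdots u_N<_\sigma v_1\cdots v_N$, i.e.\ $u_1\cdots u_N>_{\mathrm{revlex}}v_1\cdots v_N$. As in Example~\ref{explu}, there is a largest index $q$ (with respect to the standard order $x_1>\cdots>x_n$) such that $\nu_i(u_1\cdots u_N)=\nu_i(v_1\cdots v_N)$ for all $i>q$ and $\nu_q(u_1\cdots u_N)<\nu_q(v_1\cdots v_N)$, and since the two products have the same degree there must be some $j<q$ with $\nu_j(u_1\cdots u_N)>\nu_j(v_1\cdots v_N)$; pick $1\le\delta\le N$ with $j\in\supp(u_\delta)$. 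Automatically $x_j>_{\mathrm{revlex}}x_q$, i.e.\ $x_j<_\sigma x_q$, and condition (i) of the $\sigma$-exchange property holds. The only thing that needs work is condition (iii): $x_qu_\delta/x_j\in L(u,v)$.

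The key step is therefore to verify $v\le_{\mathrm{lex}}x_qu_\delta/x_j\le_{\mathrm{lex}}u$. Since $j<q$, replacing one $x_j$ by $x_q$ strictly decreases the monomial in the lex order, so $x_qu_\delta/x_j<_{\mathrm{lex}}u_\delta\le_{\mathrm{lex}}u$, giving the upper bound for free. For the lower bound, I would argue as follows. First, since $q\in\supp(v_1\cdots v_N)$ and every generator of $I$ is $\le_{\mathrm{lex}}v=x_lx_n^{d-1}$ and $\ge_{\mathrm{lex}}$ (the smallest monomial of $L(u,v)$), one controls which variables can occur; in particular every $v_t$ has $\min(v_t)\ge l$ unless $x_1\mid v_t$, and the structure $v=x_lx_n^{d-1}$ forces the generators of $L(u,v)$ with no $x_1$ to be exactly the monomials $w$ with $l\le\min(w)$ and $w\ge_{\mathrm{lex}}x_lx_n^{d-1}$, i.e.\ of the form $x_l\cdot(\text{degree }d-1\text{ monomial in }x_l,\ldots,x_n)$ together with monomials $>_{\mathrm{lex}}x_lx_n^{d-1}$ not divisible by $x_1$ — the point being that no variable $x_2,\ldots,x_{l-1}$ alone can appear in a generator without $x_1$. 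I would then split on whether $j=1$ or $j\ge 2$. If $j=1$: then $q\ge 2$, and $x_qu_\delta/x_1$ has $\max$ equal to $\max(x_qu_\delta)$; one checks $x_qu_\delta/x_1\ge_{\mathrm{lex}}x_lx_n^{d-1}$ using that $u_\delta\le_{\mathrm{lex}}u=x_1x_{l+1}^{a_{l+1}}\cdots x_n^{a_n}$ so $u_\delta/x_1\le_{\mathrm{lex}}x_{l+1}^{a_{l+1}}\cdots x_n^{a_n}<_{\mathrm{lex}}x_l\cdots$, hence multiplying by $x_q\le x_2$ keeps us $\ge_{\mathrm{lex}}x_l x_n^{d-1}$ precisely because the leading variable of $x_qu_\delta/x_1$ is $\le x_l$ only when forced, and the tail is bounded below by $x_n^{d-1}$. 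If $j\ge 2$: then $x_1\nmid u_\delta$ (as $\nu_1(u_\delta)\le 1$ and $j\ge2$ occupies a slot — more precisely $u_\delta\le_{\mathrm{lex}}v$ would then have to be examined), and $u_\delta\ge_{\mathrm{lex}}$ the smallest generator; since $j<q$ one replaces a larger variable by a smaller one and must show we do not fall below $x_lx_n^{d-1}$, which again follows because $\min(u_\delta)\ge l$ forces $j\ge l$, hence $q>j\ge l$, and $x_qu_\delta/x_j$ still has all variables in $\{x_l,\ldots,x_n\}$ with first variable $x_l$ (if $\nu_l(u_\delta)\ge 1$ and we didn't remove the last $x_l$) so it lies in $L(u,v)$; the boundary sub-case where removing $x_j$ empties the $x_l$-slot is handled by noting $q\ge l+1$ and the resulting monomial is then $\le_{\mathrm{lex}}x_{l+1}^d<_{\mathrm{lex}}x_lx_n^{d-1}$, which would contradict $x_qu_\delta/x_j$ having been forced downward only by one step — so in fact one shows this sub-case cannot arise from the minimality of $q$.

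The main obstacle, and the place requiring genuine care rather than bookkeeping, is the lower-bound verification $x_qu_\delta/x_j\ge_{\mathrm{lex}}v$ in the sub-case where the exchange would remove the unique small variable anchoring $u_\delta$ inside the lexsegment (e.g.\ removing the only $x_1$, or the only $x_l$). The resolution should exploit two facts in tandem: (1) the special shape $v=x_lx_n^{d-1}$, which makes the ``bottom'' of the lexsegment very rigid; and (2) the fact that $q$ was chosen as the \emph{largest} index with strict inequality, so that all the ``mass'' that $v_1\cdots v_N$ has above $v_1\cdots v_N$'s agreement with $u_1\cdots u_N$ is concentrated at positions $\le q$ — in particular $x_q$ with $q$ not too large, so that $x_q u_\delta/x_j$ cannot drop below $x_lx_n^{d-1}$ because its leading variable stays $\le x_l$ exactly when it must. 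I would organize the write-up by first disposing of the easy upper bound, then treating $j=1$, then $j\ge 2$, and in each finish by exhibiting the required $\delta,q,j$; the non-completely-lexsegment hypothesis enters only through the explicit forms of $u$ and $v$, which is what keeps all the case analysis finite and tractable.
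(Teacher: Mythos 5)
Your reduction to the single inequality $v\le_{\mathrm{lex}}x_qu_\delta/x_j\le_{\mathrm{lex}}u$ is the right way to see the problem (the upper bound is indeed free since $j<q$, and because $v=x_lx_n^{d-1}$ the lower bound is equivalent to saying that $x_qu_\delta/x_j$ still contains a variable of index $\le l$). But the proof has a genuine gap exactly at the sub-case you flag as ``requiring genuine care'': when $q\ge l+1$, $j\le l$, and $x_j$ is the \emph{unique} variable of $u_\delta$ of index $\le l$, the exchange really does drop below $v$, and your claim that ``this sub-case cannot arise from the minimality of $q$'' is false for an arbitrary admissible choice of $(j,\delta)$ --- nothing in the choice of $q$ as the largest index of disagreement prevents the particular $j$ you picked from being the unique anchor of the particular $u_\delta$ you picked. (Several of the supporting assertions are also reversed or wrong: generators of $L(u,v)$ satisfy $\min(u_\delta)\le l$, not $\ge l$; a generator not divisible by $x_1$ can perfectly well involve $x_2,\dots,x_{l-1}$, e.g.\ $x_2^d\in L(u,v)$ when $l\ge3$; and $j\le l$ is the typical, not the excluded, situation.) What is actually true --- and what still needs proof --- is that \emph{some} admissible triple $(\delta,j,q)$ avoids the bad sub-case, and establishing this existence requires a global argument, not a local inspection of one $u_\delta$.

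The paper supplies exactly this missing global step, by contradiction plus a degree count: if \emph{every} admissible exchange fails, then (taking $j=\min(u_\delta)$ for each $\delta$) every $u_\delta$ must have exactly one variable of index $\le l$ and all its remaining $d-1$ variables of index $\ge q$, so $\sum_{i\ge q}\nu_i(u_1\cdots u_N)=N(d-1)$; on the other hand each $v_\gamma\ge_{\mathrm{lex}}v$ forces $\min(v_\gamma)\le l<q$, so $\sum_{i<q}\nu_i(v_1\cdots v_N)\ge N$, and combining this with $\nu_q(u_1\cdots u_N)<\nu_q(v_1\cdots v_N)$ and $\nu_i(u_1\cdots u_N)=\nu_i(v_1\cdots v_N)$ for $i>q$ gives $Nd>N+N(d-1)=Nd$. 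To repair your write-up you would either have to reproduce this counting argument (thereby showing that not all the candidate $(j,\delta)$ can be bad, and then selecting a good one), or find some other mechanism for choosing $(j,\delta)$; as written, the proposal asserts rather than proves the decisive point.
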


\begin{proof} Let $u=x_1x_{l+1}^{a_{l+1}}\cdots x_n^{a_n}$, $v=x_lx_n^{d-1}$ for some $2\leq l\leq n-1$. Let us assume that there exists a monomial order $<$ on $R$ such that $L(u,v)$ does not satisfy the $\sigma$-exchange property with respect to $<$. Then there exist two standard monomials $T_{u_1}\cdots T_{u_N}$ and $T_{v_1}\cdots T_{v_N}$ such that $u_1\cdots u_N>_{revlex}v_1\cdots v_N$ and with the property that for all $1\leq \delta\leq N$, $j\in\supp(u_{\delta})$ and $q\in\supp(v_1\cdots v_N)$ such that $\nu_q(u_1\cdots u_N)<\nu_{q}(v_1\cdots v_N)$ and $x_j>_{revlex}x_q$, we have $x_qu_{\delta}/x_j\notin L(u,v)$. Since $u_1\cdots u_N>_{revlex}v_1\cdots v_N$ there exists some $q$, $1\leq q\leq n$, such that
	\[\nu_i(u_1\cdots u_N)=\nu_i(v_1\cdots v_N) \mbox{ for all }i\geq q+1
\]
and $\nu_q(u_1\cdots u_N)<\nu_{q}(v_1\cdots v_N)$. Since $\deg(u_1\cdots u_N)=\deg(v_1\cdots v_N)$ there exists some $s<q$ such that $\nu_s(u_1\cdots u_N)>\nu_{s}(v_1\cdots v_N)$. Let $u_{\delta}$ be such that $s\in\supp(u_{\delta})$. By our assumption, we must have $x_qu_{\delta}/x_s<_{lex}v$, that is $x_qu_{\delta}/x_s\leq_{lex}x_{l+1}^d$. This implies, in particular, that $q\geq l+1$, and that for all $\delta$, $1\leq \delta\leq N$, there exists a unique $j_{\delta}\leq l$ such that $u_{\delta}=x_{j_{\delta}}w_{\delta}$ where $\min(w_{\delta})\geq l+1$.

Therefore we have $u_{1}\cdots u_{N}=x_{j_1}\cdots x_{j_N}x_{t}^{a_t}\cdots x_n^{a_n}$, where $j_1,\cdots,j_N< q$ and $t\geq q$. We have
	\[a_t+\cdots+a_n=\deg(x_{t}^{a_t}\cdots x_{n}^{a_n})=Nd-N=N(d-1).
\]
Let $v_1\cdots v_N=x_1^{b_1}\cdots x_n^{b_n}$. By hypothesis, we have $a_q<b_q$ and $a_i=b_i$ for all $i\geq q+1$. Since each monomial $v_{\gamma}\in L(u,v)$ it is divisible by some variable $x_i$ with $i\leq l<q$, we have $b_1+\cdots+b_{q-1}\geq N$.
Then we have
	\[Nd=b_1+\cdots+b_{q-1}+b_q+\cdots+b_n>b_1+\cdots+b_{q-1}+a_q+\cdots+a_t+\cdots+a_n\geq
	\]
	\[\geq N+N(d-1)=Nd,
\]
a contradiction.
\end{proof}

\begin{Corollary}\label{cornoncomp} All powers of a lexsegment ideal with a linear resolution which is not a completely lexsegment ideal have linear quotients with respect to the increasing revlexicographic order.
\end{Corollary}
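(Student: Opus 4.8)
The plan is simply to read the corollary off from the two results that immediately precede it. First I would fix notation: let $I=(L(u,v))$ be a lexsegment ideal with a linear resolution which is not a completely lexsegment ideal, let $B=L(u,v)=G(I)$, and let $<_{\sigma}$ be the decreasing revlexicographical order on $S$. By the theorem just proved, $B$ satisfies the $\sigma$-exchange property with respect to \emph{every} monomial order on $R=K[\{T_w:w\in L(u,v)\}]$; I would fix one such order $<^{\#}$ (for instance the lexicographic order on the variables $T_w$ induced by $T_{w_1}>T_{w_2}$ whenever $w_1>_{lex}w_2$), so that all the hypotheses of Theorem~\ref{powersexchange} are in place with this choice of $B$, $<^{\#}$ and $<_{\sigma}$.

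Next I would invoke Theorem~\ref{powersexchange} directly: since $B=G(I)$ satisfies the $\sigma$-exchange property with respect to $<^{\#}$, the theorem yields that $I^{N}$ has linear quotients with respect to $>_{\sigma}$ for every $N\geq1$. It then only remains to translate ``$>_{\sigma}$'' into the terminology of the statement. The minimal monomial generators of $I^{N}$ all lie in $\Mc_{Nd}$, and for monomials of one and the same degree the order $<_{\sigma}$ is, by its very definition (Example~\ref{explu}), the reverse of the revlexicographical order; hence $>_{\sigma}$ restricted to $\Mc_{Nd}$ is precisely the increasing revlexicographical order. Listing $G(I^{N})$ according to $>_{\sigma}$ therefore amounts to listing it in increasing revlex order, so the linear-quotients property supplied by Theorem~\ref{powersexchange} is exactly the assertion of the corollary, for every power $N\geq1$.

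There is essentially no obstacle here: all of the substance is already contained in Theorem~\ref{powersexchange} and in the $\sigma$-exchange property of non-completely lexsegment ideals with linear resolution, both established above. The only point that needs (trivial) care is the bookkeeping verifying that the abstract order $>_{\sigma}$ coincides, on each fixed degree, with the increasing revlexicographic order named in the statement; once that identification is made, the corollary follows by concatenating the two theorems.
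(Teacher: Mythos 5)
Your proposal is correct and is exactly the argument the paper intends: the corollary is stated without proof as an immediate consequence of the preceding theorem (the $\sigma$-exchange property for non-completely lexsegment ideals with linear resolution, with $<_{\sigma}$ the decreasing revlex order) combined with Theorem~\ref{powersexchange}. Your additional bookkeeping identifying $>_{\sigma}$ on a fixed degree with the increasing revlexicographic order is the right (and only) translation step needed.
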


\begin{Corollary} Any lexsegment ideal with a linear resolution which is not a completely lexsegment ideal is of fiber type.
\end{Corollary}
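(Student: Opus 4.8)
The plan is to deduce this at once from the two results immediately preceding it. First note that for a lexsegment ideal $I=(L(u,v))$ every generator in $L(u,v)$ has the same degree $d$, so no element of $L(u,v)$ divides another; hence $L(u,v)$ is exactly the minimal monomial generating set $G(I)$, and the hypotheses of Corollary~\ref{corfiber} may be checked on $B=L(u,v)$.

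Next I would invoke the theorem just proved: if $<_{\sigma}$ denotes the decreasing revlexicographical order on $S$ and $I=(L(u,v))$ is a lexsegment ideal with linear resolution which is not completely lexsegment, then $L(u,v)$ satisfies the $\sigma$-exchange property with respect to \emph{any} monomial order on $R=K[\{T_w:w\in L(u,v)\}]$. In particular, fixing one such monomial order $<^{\#}$ on $R$ (for instance the lexicographic order induced by $T_{w_1}>T_{w_2}$ whenever $w_1>_{\lex}w_2$), the set $B=G(I)$ satisfies the $\sigma$-exchange property with respect to $<^{\#}$.

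Finally I would apply Corollary~\ref{corfiber}: since $B=G(I)$ satisfies the $\sigma$-exchange property with respect to the monomial order $<^{\#}$ on $R$, the ideal $I$ is of fiber type, i.e.\ the fiber relations together with the relations of the symmetric algebra $\mathcal{S}(I)$ generate the toric ideal $P_{\Rc(I)}$ of the Rees algebra $\Rc(I)$. This proves the statement.

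There is essentially no obstacle here: the substantive work was carried out in the preceding theorem, which verified the $\sigma$-exchange property by a degree-counting argument on the supports of the standard products $u_1\cdots u_N$ and $v_1\cdots v_N$, and in Theorem~\ref{GB} and Corollary~\ref{corfiber}, which translate that combinatorial property into the structural statement about $P_{\Rc(I)}$. The only point requiring care is the matching of orders: Corollary~\ref{corfiber} permits an arbitrary monomial order $<_{\sigma}$ on $S$ and an arbitrary monomial order $<^{\#}$ on $R$, while the preceding theorem produces the $\sigma$-exchange property precisely for the decreasing revlex order $<_{\sigma}$ and for every $<^{\#}$, so the two hypotheses mesh with no further argument.
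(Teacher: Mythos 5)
Your proof is correct and follows exactly the route the paper intends: the corollary is an immediate consequence of the preceding theorem (the $\sigma$-exchange property of $L(u,v)$ for the decreasing revlex order, valid for any monomial order on $R$) combined with Corollary~\ref{corfiber}, which is why the paper states it without proof. Your remark on matching the orders is the only point of care, and you handle it correctly.
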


\begin{Corollary}\label{lexKoszul} Let $I=(L(u,v))$ be a lexsegment ideal with a linear resolution which is not a completely lexsegment ideal. Then the Rees algebra $\Rc(I)$ is Koszul.
\end{Corollary}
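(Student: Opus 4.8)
The plan is to exhibit an explicit quadratic Gr\"obner basis of the toric ideal $P_{\Rc(I)}$ by combining Theorem~\ref{GB} with the quadratic Gr\"obner basis of $J_{\Lc_{u,v}}$ recalled in Section~1, and then to apply the well-known principle --- the one used in \cite{D} to prove that $\Lc_{u,v}$ is Koszul --- that a standard graded $K$-algebra whose defining ideal admits a Gr\"obner basis of quadrics is Koszul.

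Concretely, put $B=G(I)=L(u,v)$ and choose for $<^{\#}$ the lexicographic order on $R=K[\{T_w : w\in L(u,v)\}]$ induced by $T_{w_1}>T_{w_2}$ whenever $w_1>_{\lex}w_2$, that is, the restriction to $R$ of the order on $K[\mathbb{T}]$ used in Section~1. By the results of \cite{D} recalled there, $\mathcal{G}'=\{T_{\qb}T_{\rb}-T_{\ab}T_{\bb}\}$ is a Gr\"obner basis of $J_{\Lc_{u,v}}=J_{K[B]}$ with respect to $<^{\#}$ consisting of quadratic binomials; hence $\ini_{<^{\#}}(J_{K[B]})$ is generated in degree $\leq 2$, and so the reduced Gr\"obner basis $G_{<^{\#}}(J_{K[B]})$ is again quadratic (interreducing a Gr\"obner basis of a homogeneous ideal preserves degrees, and $J_{K[B]}$ is homogeneous for the grading $\deg T_w=1$ since every $w\in B$ has degree $d$; there is no relation of degree $\leq 1$ because $\pi$ is injective on $R_1$). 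By the theorem above establishing that $L(u,v)$ satisfies the $\sigma$-exchange property, $B$ satisfies this property with respect to $<^{\#}$, where $<_{\sigma}$ denotes the decreasing revlexicographical order on $S$. Thus Theorem~\ref{GB} applies: the reduced Gr\"obner basis of $P_{\Rc(I)}$ with respect to $<_{\sigma}^{\#}$ is $G_{<^{\#}}(J_{K[B]})$ together with the binomials $x_iT_u-x_jT_v$, all of which are quadratic.

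It then remains to observe that $\Rc(I)=\Tc/P_{\Rc(I)}$ is a standard graded $K$-algebra once the $x_i$ and the $T_u$ are given degree $1$ (the ideal $P_{\Rc(I)}$ being bihomogeneous, hence homogeneous, in $\Tc$), so that the existence of a Gr\"obner basis of quadrics forces $\Rc(I)$ to be Koszul. The only point needing a little care is the claim that $G_{<^{\#}}(J_{K[B]})$ stays quadratic after interreduction; I expect this to be the mildest obstacle, since it reduces to the degree-preservation remark above, while everything else is a direct application of results already proved in the paper.
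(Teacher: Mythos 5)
Your proposal is correct and follows essentially the same route as the paper: choose $<^{\#}$ to be the lexicographic order on $R$ from Section~1, invoke the $\sigma$-exchange property for the decreasing revlex order to apply Theorem~\ref{GB}, and conclude from the quadratic Gr\"obner basis (the paper cites \cite[Proposition 2.13]{D} for the quadraticity of $G_{<^{\#}}(J_{K[L(u,v)]})$, where you instead derive it from $\mathcal{G}'$ plus degree preservation under interreduction --- a harmless variation). No gaps.
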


\begin{proof} Let $<^\#$ be the lexicographical monomial order on $R=K[\{T_w:w\in L(u,v)\}]$ induced by $T_{w_1}>T_{w_2}$ if $w_1>_{lex}w_2$ and $<_{\sigma}$ be the decreasing revlexicographic order on $S$. By Theorem \ref{GB}, the reduced Gr\"obner basis of $P_{\Rc(I)}$ with respect to the product order $<_{\sigma}^{\#}$ on $\Tc$ is formed by the binomials from $G_{<^\#}\left(J_{K[L(u,v)]}\right)$, the reduced Gr\"obner basis of $J_{K[L(u,v)]}$, and by the binomials of the form
	\[x_iT_{u'}-x_jT_{v'},
\]
where $x_i>_{\sigma}x_{j}$, $x_iu'=x_jv'$ and $j $ is the smallest integer with $x_iu'/x_j\in L(u,v)$. Since $G_{<^\#}\left(J_{K[L(u,v)]}\right)$ is quadratic (\cite[Proposition 2.13]{D}), the statement follows. 
\end{proof}

\end{document}